\documentclass{amsart}
\usepackage{amsfonts,amsmath,amssymb,graphicx,graphics,color,
lscape,comment,amsbsy,mathrsfs}

\numberwithin{equation}{section}

\newtheorem{theorem}{Theorem}
\numberwithin{theorem}{section}
\newtheorem{lemma}{Lemma}
\numberwithin{lemma}{section}
\newtheorem{prop}{Proposition}
\numberwithin{prop}{section}
\newtheorem{corol}{Corollary}
\numberwithin{corol}{section}
\newtheorem{remark}{Remark}
\numberwithin{remark}{section}

\numberwithin{defi}{section}

\numberwithin{exe}{section}

\linespread{1.1}

\newcommand{\N}{\mathbb{N}}

\newcommand{\notthis}[1]{}

\title[Inversion Formulas]{\textbf{Inversion Formulas with Hypergeometric polynomials and its application to an integral equation}}
\author{R. Nasri(*), A. Simonian(*) and F. Guillemin (**)}
\address{Address:  
(*) Orange Labs, OLN/NMP, Orange Gardens, 
44 avenue de la République, CS 50010, 92326 Chatillon Cedex, France
France (**) Orange Labs Networks Lannion, 2 avenue Pierre Marzin, 22307 Lannion Cedex, Lannion, France}
\email{[ridha.nasri, alain.simonian, fabrice.guillemin]@orange.com}

\begin{document}

\date{Version of \today}

\begin{abstract}
For any complex parameters $x$ and $\nu$, we provide a new class of linear inversion formulas 
$T = A(x,\nu) \cdot S \Leftrightarrow S = B(x,\nu) \cdot T$ 
between sequences $S = (S_n)_{n \in \mathbb{N}^*}$ and 
$T = (T_n)_{n \in \mathbb{N}^*}$, where the infinite lower-triangular matrix 
$A(x,\nu)$ and its inverse $B(x,\nu)$ involve Hypergeometric polynomials 
$F(\cdot)$, namely
$$
	\left\{
	\begin{array}{ll}
	A_{n,k}(x,\nu) = \displaystyle (-1)^k\binom{n}{k}F(k-n,-n\nu;-n;x),
  \\ \\
	B_{n,k}(x,\nu) = \displaystyle (-1)^k\binom{n}{k}F(k-n,k\nu;k;x)
	\end{array} \right.
$$
for $1 \leqslant k \leqslant n$. Functional relations between the ordinary 
(resp. exponential) generating functions of the related sequences $S$ and $T$ are also given. 

These new inversion formulas have been initially motivated by the resolution of an integral equation recently appeared in the field of Queuing Theory; we apply them to the full resolution of this integral equation. Finally, matrices involving generalized Laguerre polynomials polynomials are discussed as specific cases of our general inversion scheme.
\end{abstract}

\maketitle 


\section{Introduction}


In this Introduction, we present a general class of linear inversion formulas with coefficients involving Hypergeometric polynomials and motivate the need for such formulas.  After an overview of the recent state-of-the-art in the corresponding field, we summarize the main contributions of this paper.

\subsection{Motivation}
\label{IM}
The need for an inversion formula whose coefficients involve Hypergeometric polynomials is motivated by the resolution of an integral equation arising  from Queuing Theory \cite{GQSN18}, 
which can be formulated as follows:


\textit{\textbf{given a constant $U^- > 0$,  a real function $\mathfrak{R}$ on $[0,U^-]$ (with $\mathfrak{R}(U^-) = 0$) and an entire function $A$ in $\mathbb{C}$, solve the integral equation}}
\begin{equation}
\int_0^{U^-} E^*(\zeta \, \mathfrak{R}(\zeta) \cdot z) \, 
e^{- \mathfrak{R}(\zeta) \cdot z} \, \mathrm{d}\zeta = A(z), 
\qquad z \in \mathbb{C},
\label{EI0}
\end{equation}
\textit{\textbf{for an unknown entire function $E^*$ in $\mathbb{C}$ with 
$E^*(0) = 0$}.}

The product $\zeta \, \mathfrak{R}(\zeta)$ intervening in the argument of 
$E^*$ in (\ref{EI0}) being not one-to-one on interval $[0,U^-]$ (it vanishes at both $\zeta = 0$ and $\zeta = U^-$), this integral equation is not amenable to a standard Fredholm equation of the first kind (\cite{PolMan98}, Chap.3, 
3.1.6). An exponential power series
\begin{equation}
E^*(z) = \sum_{\ell = 1}^{+\infty} E_\ell \, \frac{z^\ell}{\ell!}, 
\qquad z \in \mathbb{C},
\label{defE*}
\end{equation}
for an entire solution $E^*$, however, drives the resolution of (\ref{EI0}) to that of the infinite lower-triangular linear system
\begin{equation}
\forall \, b \in \mathbb{N}^*, \qquad 
\sum_{\ell = 1}^b (-1)^\ell \binom{b}{\ell} \, 
Q_{b,\ell} \, E_{\ell} = K_b,  
\label{T0}
\end{equation}
with unknown $E_\ell$, $\ell \in \mathbb{N}^*$, and coefficient matrix 
$Q = (Q_{b,\ell})_{b, \ell \in \mathbb{N}^*}$ given by
\begin{equation}
Q_{b,\ell} = - \frac{\Gamma(b)\Gamma(1-b\nu)}{\Gamma(b-b\nu)} \, 
(U^-)^{\ell+1} \, \frac{x^{1-b}}{1-x} \; F(\ell - b,-b \nu;-b;x), 
\qquad 1 \leqslant \ell \leqslant b.
\label{Q0}
\end{equation}
In (\ref{Q0}), $\Gamma$ is the Euler Gamma function and 
$F(\alpha,\beta;\gamma;\cdot)$ denotes the Gauss Hypergeometric function with complex parameters $\alpha$, $\beta$, $\gamma \notin -\mathbb{N}$; besides, 
$U^- > 0$, $x$ and $\nu < 0$ are known real parameters (whose specification is not needed). Recall that $F(\alpha,\beta;\gamma;\cdot)$ reduces to a polynomial with degree $-\alpha$ (resp. $-\beta$) if $\alpha$ 
(resp. $\beta$) equals a non positive integer; expression (\ref{Q0}) for coefficient $Q_{b,\ell}$ thus involves a Hypergeometric polynomial with degree $b - \ell$ in both arguments $x$ and $\nu$. At this stage, the explicit expression of the right-hand side $K_b$ in (\ref{T0}) is not necessary. 

Diagonal coefficients $Q_{b,b}$, $b \geqslant 1$, are non-zero so that lower-triangular system (\ref{T0}) has a unique solution; equivalently, this proves the uniqueness of the  entire solution $E^*$ to (\ref{EI0}) with power series expansion (\ref{defE*}). This solution, nevertheless, needs to be made explicit in terms of parameters; to this end, write system (\ref{T0}) equivalently as 
\begin{equation}
\forall \, b \in \mathbb{N}^*, \qquad 
\sum_{\ell = 1}^b A_{b,\ell}(x,\nu) \, \widetilde{E}_\ell = \widetilde{K}_b,
\label{T0BIS}
\end{equation}
with the reduced unknowns and right-hand side
$$
\widetilde{E}_\ell = ( U^- )^{\ell+1} \cdot E_\ell, 
\qquad
\widetilde{K}_b = - \frac{\Gamma(b-b\nu)}{\Gamma(b)\Gamma(1-b\nu)} (1-x)x^{b-1}\cdot K_b,
$$ 
and coefficients
\begin{equation}
A_{b,\ell}(x,\nu) = (-1)^\ell \binom{b}{\ell} F(\ell-b,-b\nu;-b;x), 
\qquad 1 \leqslant \ell \leqslant b.
\label{T0TER}
\end{equation}
As shown in the present paper, it proves that that the linear relation 
(\ref{T0BIS}) to which initial system (\ref{T0}) has been recast is always amenable to an explicit inversion for any right-hand side 
$(K_b)_{b \in \mathbb{N}^*}$, the inverse matrix 
$B(x,\nu) = A(x,\nu)^{-1}$ involving also Hypergeometric polynomials. This consequently solves system (\ref{T0}) explicitly, hence integral equation 
(\ref{EI0}).

Beside the initial motivation stemming from an integral equation, the remarkable structure of the inversion scheme $B(x,\nu) = A(x,\nu)^{-1}$ obtained in this paper brings a new contribution to the realm of linear inversion formulas, namely infinite lower-triangular matrices with coefficients involving Hypergeometric polynomials; as shown in the following, other polynomial families can also be included in this pattern. In the following sub-section, we position the originality of the present contribution with respect to known inversion patterns. 

\subsection{State-of-the-art}
We here review the known classes of linear inversion formulas provided by the recent literature, most of them motivated by problems from pure Combinatorics together with the determination of remarkable relations on special functions.  Given a complex sequence $(a_j)_{j \in \mathbb{N}}$, it has been early shown \cite{GouldHsu73} that the lower triangular matrices $A$ and 
$B$ with coefficients
$$
A_{n,k} = \frac{1}{(n-k)!} \prod_{j=k}^{n-1}(a_j + k), 
\quad
B_{n,k} = \frac{a_k+k}{a_n+n} \cdot \frac{(-1)^{n-k}}{(n-k)!} 
\prod_{j=k+1}^{n}(a_j + n)
$$
for $k \leqslant n$ (with a product over an empty set being set 1) are inverses. These inversion formulas actually prove to be a particular case of the general Krattenthaler formulas \cite{Kratten96} stating that, given complex sequences $(a_j)_{j \in \mathbb{Z}}$, $(b_j)_{j \in \mathbb{Z}}$ and 
$(c_j)_{j \in \mathbb{Z}}$ with $c_j \neq c_k$ for $j \neq k$, the lower triangular matrices $A$ and $B$ with coefficients
\begin{equation}
A_{n,k} = 
\frac{\displaystyle \prod_{j=k}^{n-1}(a_j + b_j c_k)}
{\displaystyle \prod_{j=k+1}^{n}(c_j - c_k)} , 
\qquad
B_{n,k} = \frac{a_k + b_kc_k}{a_n + b_nc_n} \cdot 
\frac{\displaystyle \prod_{j=k+1}^{n}(a_j + b_j c_n)}
{\displaystyle \prod_{j=k}^{n-1}(c_j - c_n)}
\label{Kratt0}
\end{equation}
for $k \leqslant n$, are inverses; the proof of (\ref{Kratt0}) relies on the existence of linear operators $\mathscr{U}$, $\mathscr{V}$ on the linear space of formal Laurent series such that 
$$
\mathscr{U} f_k(z) = c_k \cdot \mathscr{V} f_k(z), \qquad k \in \mathbb{Z},
$$
where $f_k(z) = \sum_{n \geqslant k} A_{n,k} z^n$; the partial Laurent series 
$g_n(z) = \sum_{k \leqslant n} B_{n,k} z^{-k}$, $n \in \mathbb{Z}$, for the inverse inverse $B = A^{-1}$ can then be expressed in terms of the adjoint operator $\mathscr{V}^*$ of $\mathscr{V}$. A generalization of inverse relation 
(\ref{Kratt0}) to the multi-dimensional case when 
$A = (A_{\mathbf{n},\mathbf{k}})$ with indexes $\mathbf{n}$, 
$\mathbf{k} \in \mathbb{Z}^r$ for some integer $r$ has also been provided in \cite{Schlo97}; as an application, the obtained relations bring summation formulas for multidimensional basic hypergeometric series. 

The lower triangular matrix $A = A(x,\nu)$ introduced in 
(\ref{T0BIS})-(\ref{T0TER}), however, cannot be cast into the specific product form (\ref{Kratt0}) for its inversion: in fact, such a product form for the coefficients of $A(x,\nu)$ should involve the $n-k$ zeros $c_{j,n,k}$, 
$k \leqslant j \leqslant n-1$ of the Hypergeometric polynomial 
$F(k-n,-n\nu;-n;x)$, $k \leqslant n$, in variable $x$; but such zeros depend on all indexes $j$, $n$ and $k$, which precludes the use of a factorization such as (\ref{Kratt0}) where sequences with one index only intervene. In this paper, using functional operations on specific generating series related to its coefficients, we will show how matrix $A(x,\nu)$ can be nevertheless  inverted through a fully explicit procedure.

\subsection{Paper contribution}
Our main contributions can be summarized as follows:

$\bullet$ in Section \ref{LTS}, we first establish an inversion criterion for a class of infinite lower-triangular matrices, which enables us to state the inversion formula for the considered class of lower triangular matrices with Hypergeometric polynomials;

$\bullet$ in Section \ref{GF}, functional relations are obtained for ordinary 
(resp. exponential) generating functions of sequences related by the inversion formula;

$\bullet$ applying the latter general results, the infinite linear system 
(\ref{T0BIS}) motivated above is fully solved; both the ordinary and exponential generating functions associated with its solution are, in particular, given an integral representation (Section \ref{SecA1}). Finally, matrices depending on other families of special polynomials \---- namely, generalized Laguerre polynomials, are discussed as specific cases of our general inversion scheme (Section \ref{SecA2}).


\section{Lower-Triangular Systems}
\label{LTS}


Let $(a_m)_{m\in \N}$ and $(b_m)_{m\in \N}$ be complex sequences such that 
$a_0 = b_0 =1$ and denote by $f(x)$ and $g(x)$ their respective exponential generating series, i.e.,
\begin{equation}
f(x)=\sum_{m=0}^{+\infty}\frac{a_m}{m!} \, x^m, 
\qquad 
g(x)=\sum_{m=0}^{+\infty}\frac{b_m}{m!} \, x^m; 
\label{DefFG}
\end{equation}
in the following, we will use the notation $[x^n] f(x)$ for the coefficient of 
$x^n$, $n \in \mathbb{N}$, in power series $f(x)$. For all $x \in \mathbb{C}$, define the infinite lower-triangular matrices 
$A(x) = (A_{n,k}(x))_{n,k \in \N^*}$ and 
$B(x) = (B_{n,k}(x))_{n,k \in \N^*}$ by
\begin{equation}
	\left\{
	\begin{array}{ll}
	A_{n,k}(x) = \displaystyle 
	(-1)^k \binom{n}{k}\sum_{m=0}^{n-k}\frac{(k-n)_m \, a_m}{m!} \, x^m,
	\\ \\
	B_{n,k}(x) = \displaystyle 
	(-1)^k \binom{n}{k}\sum_{m=0}^{n-k}\frac{(k-n)_m \, b_m}{m!} \, x^m
	\end{array} \right.
\label{DefAB}
\end{equation}
for $1 \leqslant k \leqslant n$ ($(c)_m$, $c \in \mathbb{C}$, 
$m \in \mathbb{N}^*$, denotes the Pochhammer symbol (\cite{NIST10}, §5.2(iii)) with $(c)_0 = 1$). From definition (\ref{DefAB}), matrices $A(x)$ and $B(x)$ have diagonal elements equal to $A_{k,k}(x) = B_{k,k}(x) = (-1)^k$, 
$k \in \mathbb{N}^*$, and are thus invertible. 

\subsection{An inversion criterion}
We first state the following inversion criterion.

\begin{prop}
\textbf{Matrices $A(x)$ and $B(x)$ are inverse of each other if and only if the condition}
	\begin{equation}
	[x^{n-k}]f(-x)g(x)=\delta(n-k), 
	\qquad 1 \leqslant k \leqslant n,
	\label{Invers0}
	\end{equation}
\textbf{on functions $f$ and $g$ holds.}
\label{theoMaininversionR}
\end{prop}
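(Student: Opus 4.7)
The strategy is to reduce the matrix identity $A(x)B(x) = I$ to a single power-series identity on $f(-x)g(x)$ via an exponential-generating-function convolution. First I would simplify the entries: since $(k-n)_m/m! = (-1)^m \binom{n-k}{m}$ for $0 \le m \le n-k$, the defining formulas recast as
\[
A_{n,k}(x) = (-1)^k \binom{n}{k}\, \phi_{n-k}(x), \qquad B_{n,k}(x) = (-1)^k \binom{n}{k}\, \psi_{n-k}(x),
\]
with $\phi_p(x) := \sum_{m=0}^p \binom{p}{m} a_m (-x)^m$ and analogously $\psi_p(x)$. A direct computation then yields the EGFs
\[
\sum_{p \ge 0} \phi_p(x)\frac{t^p}{p!} = e^t f(-xt), \qquad \sum_{p \ge 0} \psi_p(x)\frac{t^p}{p!} = e^t g(-xt).
\]

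Next I would compute $(A(x)B(x))_{n,j}$ for $j \le n$: substituting the above forms and applying $\binom{n}{k}\binom{k}{j} = \binom{n}{j}\binom{n-j}{k-j}$, the change of variable $\ell = k-j$ collapses the product into a quantity depending only on $p := n - j$:
\[
(AB)_{n,j}(x) = \binom{n}{j}\sum_{\ell=0}^{p} (-1)^\ell \binom{p}{\ell}\, \phi_{p-\ell}(x)\, \psi_\ell(x).
\]
Since the EGF of $\bigl((-1)^\ell \psi_\ell(x)\bigr)_\ell$ is obtained from that of $\bigl(\psi_\ell(x)\bigr)_\ell$ by $t \mapsto -t$, i.e.\ $e^{-t} g(xt)$, the inner sum is an EGF convolution equal to $p!\,[t^p]\bigl(e^t f(-xt) \cdot e^{-t} g(xt)\bigr) = p!\,[t^p] f(-xt)\, g(xt)$.

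Finally, writing $c_q := [y^q] f(-y) g(y)$, the substitution $y = xt$ gives $[t^p] f(-xt) g(xt) = c_p\, x^p$, so
\[
(AB)_{n,j}(x) = \binom{n}{j}\, p!\, c_p\, x^p, \qquad p = n - j.
\]
The diagonal case $p = 0$ automatically reproduces $c_0 = a_0 b_0 = 1$, while for $p \ge 1$ the off-diagonal entry vanishes identically in $x$ if and only if $c_p = 0$, which is precisely condition~(\ref{Invers0}). The main technical hurdle is recognising that converting the Pochhammer symbol to a binomial coefficient is the move that exposes the EGF structure, and then keeping the signs aligned so that the factors $e^{\pm t}$ cancel cleanly to leave only the bare product $f(-xt) g(xt)$.
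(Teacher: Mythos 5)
Your proof is correct, but it takes a genuinely different route from the paper's. The paper multiplies the matrices directly, exchanges the order of summation, and is then left with the inner alternating sum $\sum_{\ell}(-1)^{\ell}/[(n-\ell-m)!\,(\ell-k-m')!]$, which it evaluates through the technical Lemma \ref{lemm1} on $D_N(\lambda,\mu)$ \---- a computation that needs Gauss's evaluation of $F(\alpha,\beta;\gamma;1)$, the reflection formula, a digamma-function limit for the boundary case $m+m'=n-k$, and analytic continuation in $(\lambda,\mu)$. You bypass all of that: the rewriting $(k-n)_m/m!=(-1)^m\binom{n-k}{m}$ exposes each row as a binomial convolution, the auxiliary exponential generating functions $e^{t}f(-xt)$ and $e^{t}g(-xt)$ encode these rows, and the alternating sum that the paper delegates to Lemma \ref{lemm1} is absorbed into the trivial cancellation $e^{t}e^{-t}=1$, leaving the closed form $(AB)_{n,j}=\binom{n}{j}\,(n-j)!\;c_{n-j}\,x^{n-j}$ with $c_q=[y^q]f(-y)g(y)$. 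This is more elementary (purely formal power-series manipulation, no special-function identities or limits), and it is also slightly sharper: by separating the matrix parameter $x$ from the formal variable of the series (your $c_p\,x^p$, versus the paper's final formula $\frac{n!}{k!}[x]^{n-k}f(-x)g(x)$, which uses the same letter for both), it makes explicit that the equivalence with condition (\ref{Invers0}) holds identically in $x$, or pointwise for any fixed $x\neq 0$, while at $x=0$ both matrices reduce to classical binomial inversion and the ``only if'' direction is vacuous \---- a nuance the paper's formulation glosses over in the same way. What the paper's heavier route buys is Lemma \ref{lemm1} itself, valid for arbitrary complex $\lambda,\mu$ and of independent interest, but it is not needed for the proposition.
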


\noindent
The proof of Proposition \ref{theoMaininversionR} requires the following technical lemma whose proof is deferred to Appendix \ref{A1}.

\begin{lemma}
	\textbf{Let $N \in \N^*$ and complex numbers $\lambda$, $\mu$. Defining}
	$$
	D_{N}(\lambda,\mu) = 
	\sum_{r=0}^{N-1}\frac{(-1)^r}{\Gamma(1+r-\lambda)\Gamma(1-r+\mu)},
	$$
	\textbf{we have}
\begin{equation}
	D_{N}(\lambda,\mu) = 
	\left \{
	\begin{array}{ll}
	\displaystyle 
	\frac{1}{\mu - \lambda} \left[ \frac{1}{\Gamma(-\lambda)\Gamma(1+\mu)} - 
	\frac{(-1)^N}{\Gamma(N-\lambda)\Gamma(1-N+\mu)} \right], \, \mu \neq \lambda
	\\ \\
	\displaystyle \frac{\sin(\pi\lambda)}{\pi} \left[ \psi(-\lambda)-\psi(N-\lambda) \right], 
	~ \qquad \qquad \qquad \qquad \quad \, \mu = \lambda,
	\end{array} \right.
\label{Sn}
\end{equation}
\textbf{where $\psi$ denotes the logarithmic derivative $\Gamma'/\Gamma$.}
	\label{lemm1}
\end{lemma}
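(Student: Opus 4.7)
The natural approach is to seek a telescoping representation of the summand, which reduces $D_N$ to a boundary expression; the second formula will then arise by taking the limit $\mu \to \lambda$ with help of reflection identities.

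\medskip

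\textbf{Step 1: Telescoping.} The plan is to look for an auxiliary sequence $u_r$ whose forward difference $u_{r+1}-u_r$ reproduces the summand up to the factor $(\lambda-\mu)$. The natural candidate, suggested by the shape of the Gamma arguments, is
$$
u_r \;=\; \frac{(-1)^r}{\Gamma(r-\lambda)\,\Gamma(1-r+\mu)}.
$$
Using only the functional equation $\Gamma(z+1)=z\,\Gamma(z)$ applied to $\Gamma(r+1-\lambda)$ and $\Gamma(1-r+\mu)$, the two terms in $u_{r+1}-u_r$ share the factor $\frac{(-1)^r}{\Gamma(r-\lambda)\Gamma(-r+\mu)}$, and collecting yields
$$
u_{r+1}-u_r \;=\; \frac{(-1)^r(\lambda-\mu)}{\Gamma(r+1-\lambda)\,\Gamma(1-r+\mu)}.
$$

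\medskip

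\textbf{Step 2: Summation, case $\mu\neq\lambda$.} Dividing by $\lambda-\mu$ and summing from $r=0$ to $N-1$ telescopes everything to $(u_N-u_0)/(\lambda-\mu)$, which after rearrangement gives exactly the first branch of (\ref{Sn}). This step is routine once Step~1 is in place.

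\medskip

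\textbf{Step 3: The diagonal case $\mu=\lambda$.} Since $D_N(\lambda,\mu)$ is jointly analytic in $(\lambda,\mu)$ (the summand is entire in both variables), $D_N(\lambda,\lambda)$ is obtained by letting $\mu\to\lambda$ in the formula of Step~2. An application of L'Hôpital to the $0/0$ limit produces the $\mu$-derivative of the bracket at $\mu=\lambda$; using $\frac{d}{d\mu}\frac{1}{\Gamma(1+\mu)}=-\frac{\psi(1+\mu)}{\Gamma(1+\mu)}$ (and similarly for $\Gamma(1-N+\mu)$), one obtains
$$
D_N(\lambda,\lambda) \;=\; -\frac{\psi(1+\lambda)}{\Gamma(-\lambda)\Gamma(1+\lambda)} \;+\; \frac{(-1)^N\,\psi(1-N+\lambda)}{\Gamma(N-\lambda)\Gamma(1-N+\lambda)}.
$$
Now apply the Euler reflection $\Gamma(z)\Gamma(1-z)=\pi/\sin(\pi z)$ with $z=-\lambda$ and $z=N-\lambda$ (using $\sin(\pi(N-\lambda))=(-1)^{N+1}\sin(\pi\lambda)$) to turn both Gamma products into $\sin(\pi\lambda)/\pi$ up to sign; the two terms combine into
$$
D_N(\lambda,\lambda)=\frac{\sin(\pi\lambda)}{\pi}\bigl[\psi(1+\lambda)-\psi(1-N+\lambda)\bigr].
$$
To match the stated form involving $\psi(-\lambda)-\psi(N-\lambda)$, invoke the digamma reflection $\psi(1-z)-\psi(z)=\pi\cot(\pi z)$ at $z=-\lambda$ and at $z=N-\lambda$: the two cotangent terms cancel (since $\cot(\pi(N-\lambda))=\cot(-\pi\lambda)=-\cot(\pi\lambda)$ with matching sign after the subtraction), yielding the second branch of (\ref{Sn}).

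\medskip

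\textbf{Main obstacle.} Step~1 --- spotting the correct $u_r$ --- is the conceptual key; the shift between $\Gamma(r-\lambda)$ in $u_r$ and $\Gamma(1+r-\lambda)$ in the summand is what makes the two contributions in $u_{r+1}-u_r$ combine cleanly with the factor $\lambda-\mu$. The only remaining bookkeeping is ensuring that the reflection identities in Step~3 are applied with the correct signs so that the answer appears in the form $\psi(-\lambda)-\psi(N-\lambda)$ rather than the equivalent $\psi(1+\lambda)-\psi(1-N+\lambda)$.
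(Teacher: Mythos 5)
Your proof is correct, and it takes a genuinely different route from the paper's. For the off-diagonal case the paper rewrites each term via the reflection formula, splits the finite sum into a difference of two infinite series (convergent only for $\Re(\mu)>\Re(\lambda)$), sums them by Gauss's theorem $F(\alpha,\beta;\gamma;1)=\Gamma(\gamma)\Gamma(\gamma-\alpha-\beta)/\Gamma(\gamma-\alpha)\Gamma(\gamma-\beta)$, and then removes the restriction $\Re(\mu)>\Re(\lambda)$ by analytic continuation; your telescoping identity
\begin{equation*}
u_{r+1}-u_r=\frac{(-1)^r(\lambda-\mu)}{\Gamma(1+r-\lambda)\Gamma(1-r+\mu)},\qquad u_r=\frac{(-1)^r}{\Gamma(r-\lambda)\Gamma(1-r+\mu)},
\end{equation*}
which I checked is an identity of entire functions (it only uses $1/\Gamma(z)=z/\Gamma(z+1)$), gives the first branch in one line for all $\mu\neq\lambda$ with no convergence hypothesis and no continuation step, so it is both more elementary and cleaner. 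For the diagonal case the two routes also differ: the paper evaluates $D_N(\lambda,\lambda)$ directly, reducing the summand to $-\sin(\pi\lambda)/[\pi(r-\lambda)]$ and using $\sum_{r=0}^{N-1}(r-\lambda)^{-1}=\psi(N-\lambda)-\psi(-\lambda)$, whereas you take the limit $\mu\to\lambda$ of the first branch via L'H\^opital and then use the $\Gamma$ and $\psi$ reflection formulas to convert $\psi(1+\lambda)-\psi(1-N+\lambda)$ into $\psi(-\lambda)-\psi(N-\lambda)$ (the cotangent cancellation you invoke is right). Your limit argument is legitimate because the summand is jointly entire, so $D_N(\lambda,\lambda)=\lim_{\mu\to\lambda}D_N(\lambda,\mu)$, and the numerator does vanish at $\mu=\lambda$; the only small caveat is that the reflection identities in your Step 3 require $\lambda\notin\mathbb{Z}$, with the integer case recovered by continuity (exactly the limit interpretation the paper itself uses when it later applies the lemma), while the paper's direct evaluation of the diagonal case is slightly shorter and avoids that detour.
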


\noindent
We now proceed with the justification of Proposition \ref{theoMaininversionR}.

\begin{proof}
$A(x)$ and $B(x)$ being lower-triangular, so is their product 
$C(x) = A(x)B(x)$. After definition (\ref{DefAB}), 
the coefficient 
$C_{n,k}(x) = \sum_{\ell \geqslant 1} A_{n,\ell}(x)B_{\ell,k}(x)$, 
$1 \leqslant k \leqslant n$ (where the latter sum over index $\ell$ is actually finite), of matrix $C(x)$ reads 
\begin{align}
C_{n,k}(x) = & \, 
\sum_{\ell = 1}^{+\infty} (-1)^\ell \, \frac{ n!}{\ell!(n-\ell)!} 
\sum_{m=0}^{n-\ell} \frac{(-1)^m(n-\ell)! \, a_m}{(n-\ell-m)!m!} \, x^m \; \times  
\nonumber \\
& \, (-1)^k \, \frac{\ell!}{k!(\ell-k)!}\sum_{m'=0}^{\ell-k} 
\frac{(-1)^{m'}(\ell-k)! \, b_{m'}}{(\ell-k-m')m'!} \, x^{m'}
\nonumber
\end{align}
after writing $(-r)_m = (-1)^m r!/(r-m)!$ for any positive integer $r$, that is,
\begin{equation}
C_{n,k}(x) = (-1)^k \frac{n!}{k!} \sum_{\ell = 1}^{+\infty} (-1)^\ell 
\sum_{m = 0}^{n-\ell} \frac{(-1)^m a_m \, x^m}{m!(n-\ell-m)!} 
\sum_{m' = 0}^{\ell - k} \frac{(-1)^{m'} b_{m'} \, x^{m'}}{m'!(\ell-k-m')!}.
\label{P11}
\end{equation}
Exchanging the summation order in (\ref{P11}) further gives
\begin{align}
C_{n,k}(x) = (-1)^k \frac{n!}{k!} & \, 
\sum_{(m,m') \in \Delta_{n,k}} \frac{(-1)^m a_m \, x^m}{m!}
\frac{(-1)^{m'} b_{m'} \, x^{m'}}{m'!} \; \times
\nonumber \\
& \sum_{k \leqslant \ell \leqslant n}\frac{(-1)^\ell}{(n-\ell-m)!(\ell-k-m')!}
\label{P12}
\end{align}
with $\Delta_{n,k} = \{(m,m') \in \mathbb{N}^2, \; m+m' \leqslant n-k\}$ and where the latter summation on index $\ell$ equivalently reads
\begin{align}
\sum_{k \leqslant \ell \leqslant n}\frac{(-1)^\ell}{(n-\ell-m)!(\ell-k-m')!} 
= & \, \sum_{r = 0}^{n-k} \frac{(-1)^{n-r}}{(r-m)!(n-r-k-m')!} 
\nonumber \\
= & \, (-1)^n \, D_{n-k+1}(m,n-k-m')
\nonumber
\end{align}
with the index change $\ell = n-r$ and the notation of Lemma \ref{lemm1}. The expression (\ref{P12}) for coefficient $C_{n,k}(x)$ consequently reduces to
\begin{align}
C_{n,k}(x) = (-1)^{n+k} \, \frac{n!}{k!} \, 
\sum_{(m,m') \in \Delta_{n,k}} & \frac{(-1)^m a_m \, x^m}{m!}
\frac{(-1)^{m'} b_{m'} \, x^{m'}}{m'!} \; \times
\nonumber \\
& D_{n-k+1}(m,n-k-m')
\label{P13}
\end{align}
and we are left to calculate $D_{n-k+1}(m,n-k-m')$ for all non negative 
$m$ and $m'$. By Lemma \ref{lemm1} applied to $\lambda = m$ and 
$\mu = n-k-m'$, we successively derive that

\begin{itemize}
\item[\textbf{(a)}] if $\mu > \lambda \Leftrightarrow m + m' < n-k$, formula 
(\ref{Sn}) entails
\begin{align}
& D_{n-k+1}(m,n-k-m') \; = 
\nonumber \\ 
& \frac{1}{n-k-(m+m')} \left [ \frac{1}{\Gamma(-m)\Gamma(1+n-k-m')} - 
\frac{(-1)^{n-k+1}}{\Gamma(n-k+1-m)\Gamma(-m')} \right ];
\nonumber
\end{align}
as $\Gamma(-m) = \Gamma(-m') = \infty$ for all non negative integers 
$m \geqslant 0$ and $m' \geqslant 0$, each fraction of the latter expression vanishes and thus
\begin{equation}
D_{n-k+1}(m,n-k-m') = 0, \qquad m + m' < n-k;
\label{P14}
\end{equation}

\item[\textbf{(b)}] if $\lambda = \mu \Leftrightarrow m + m' = n-k$, formula 
(\ref{Sn}) entails
\begin{equation}
D_{n-k+1}(m,m) = \lim_{\lambda \rightarrow m} \frac{\sin(\pi\lambda)}{\pi} 
\left [ \psi(-\lambda) - \psi(n-k+1-\lambda) \right ].
\label{P15}
\end{equation}
We have $\sin(m \pi) = 0$ while function $\psi$ has a polar singularity at every non positive integer; the limit (\ref{P15}) is therefore indeterminate 
($0 \times \infty$) but this is solved by invoking the reflection formula 
$\psi(z) - \psi(1-z) = - \pi \, \cot(\pi \, z)$, $z \notin - \mathbb{N}$, 
for function $\psi$ (\cite{NIST10}, Chap.5, §5.5.4). In fact, applying the latter to $z = -\lambda$ first gives
$\sin(\pi \lambda) \, \psi(-\lambda) = 
\sin(\pi \lambda) \, \psi(1+\lambda) + \pi \cdot \cos(\pi\lambda)$ whence
$$
\lim_{\lambda \rightarrow m} \frac{\sin(\pi \lambda)}{\pi} \, \psi(-\lambda) 
= 0 \times \psi(1+m) + (-1)^m = (-1)^m;
$$
besides, the second term $\psi(n-k+1-\lambda)$ in (\ref{P15}) has a finite limit when $\lambda \rightarrow m$ since $m + m' = n - k \Rightarrow m \leqslant n - k$ so that $n-k+1-\lambda$ tends to a positive integer. From 
(\ref{P15}) and the latter discussion, we are left with
\begin{equation}
D_{n-k+1}(m,m) = (-1)^m, \qquad m + m' = n-k.
\label{P16}
\end{equation}
\end{itemize}

In view of the previous items \textbf{(a)} and \textbf{(b)}, identities 
(\ref{P15}) and (\ref{P16}) together reduce expression (\ref{P13}) to
\begin{align}
C_{n,k}(x) = & \; (-1)^{n+k} \, \frac{n!}{k!} \, 
\sum_{m = 0}^{n-k} \frac{(-1)^m a_m \, x^m}{m!} \, (-1)^{n-k-m} 
\frac{b_{n-k-m} \, x^{n-k-m}}{(n-k-m)!} \times (-1)^m 
\nonumber \\
= & \; \frac{n!}{k!} \, \sum_{m = 0}^{n-k} \frac{(-1)^m a_m \, x^m}{m!} \, 
\frac{b_{n-k-m}}{(n-k-m)!}x^{n-k} = \frac{n!}{k!} \; [x]^{n-k}f(-x)g(x)
\nonumber
\end{align}
where $f$ and $g$ denote the exponential generating function of the sequence 
$(a_m)_{m \in \mathbb{N}^*}$ and the sequence $(b_m)_{m \in \mathbb{N}^*}$, respectively. It follows that $C(x) = A(x) B(x)$ is the identity matrix $\mathrm{Id}$ if and only if condition (\ref{Invers0}) holds, as claimed. 
\end{proof}

Following the proof of Proposition \ref{theoMaininversionR}, the same arguments apply to the general case when the sequences 
$(a_m)_{m \in \mathbb{N}}$ and $(b_m)_{m \in \mathbb{N}}$ associated with 
lower-triangular matrices $A(x)$ and $B(x)$ are also given for each pair of indexes $n, k$, that is, 
\begin{equation}
	\left\{
	\begin{array}{ll}
	A_{n,k}(x) = \displaystyle 
	(-1)^k \binom{n}{k}\sum_{m=0}^{n-k}\frac{(k-n)_m \, a_{m;n,k}}{m!} \, x^m,
	\\ \\
	B_{n,k}(x) = \displaystyle 
	(-1)^k \binom{n}{k}\sum_{m=0}^{n-k}\frac{(k-n)_m \, b_{m;n,k}}{m!} \, x^m
	\end{array} \right.
\label{DefABbis}
\end{equation}
for $1 \leqslant k \leqslant n$. Condition (\ref{Invers0}) for 
$A(x) B(x) = \mathrm{Id}$ then simply extends to
\begin{equation}
	[x^{n-k}]f_{n,k}(-x)g_{n,k}(x)=\delta(n-k), 
	\qquad 1 \leqslant k \leqslant n,
	\label{Invers0bis}
	\end{equation}
where $f_{n,k}$ (resp. $g_{n,k}$) denotes the exponential generating function of the sequence $(a_{m;n,k})_{m \in \mathbb{N}}$ (resp.  
$(b_{m;n,k)})_{m \in \mathbb{N}}$) for given $n, \, k \in \mathbb{N}^*$. This straightforward generalization of Proposition \ref{theoMaininversionR} will be hereafter invoked to verify the inversion criterion.

\subsection{The inversion formula}
We now formulate the inversion formula for lower-triangular matrices involving Hypergeometric polynomials.

\begin{theorem}
\textbf{Let $x, \nu \in \mathbb{C}$ and define the lower-triangular matrices 
$A(x,\nu)$ and $B(x,\nu)$ by}
	\begin{equation}
	\left\{
	\begin{array}{ll}
	A_{n,k}(x,\nu) = \displaystyle (-1)^k\binom{n}{k}F(k-n,-n\nu;-n;x),
  \\ \\
	B_{n,k}(x,\nu) = \displaystyle (-1)^k\binom{n}{k}F(k-n,k\nu;k;x)
	\end{array} \right.
	\label{DefABxNU}
	\end{equation}
\textbf{for $1 \leqslant k \leqslant n$. For any pair of complex sequences 
$(S_n)_{n \in \mathbb{N}^*}$ and $(T_n)_{n \in \mathbb{N}^*}$, the inversion formula}
	\begin{equation}
	T_n = \sum_{k=1}^{n}A_{n,k}(x,\nu)S_k 
	\Longleftrightarrow 
	S_n=\sum_{k=1}^{n}B_{n,k}(x,\nu)T_k, \quad n \in \mathbb{N}^*,
	\label{eq:inversionR}
	\end{equation}
\textbf{holds.}
	\label{PropIn}
\end{theorem}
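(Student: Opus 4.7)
The plan is to apply the general version of Proposition \ref{theoMaininversionR}, as extended in (\ref{DefABbis})-(\ref{Invers0bis}), to the present setting.

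First, I would match the hypergeometric series expansion
\[
F(k-n,-n\nu;-n;x) = \sum_{m=0}^{n-k} \frac{(k-n)_m \, (-n\nu)_m}{(-n)_m \, m!} \, x^m
\]
(and its analogue for $B_{n,k}$) against the template (\ref{DefABbis}). This identifies the sequences $a_{m;n,k} = (-n\nu)_m/(-n)_m$, depending only on $n$, and $b_{m;n,k} = (k\nu)_m/(k)_m$, depending only on $k$. Their exponential generating series $f_n$ and $g_k$ are truncated confluent-hypergeometric-type sums; the pole at $m = n$ in $f_n$ is harmless because the coefficient of interest $[x^{n-k}]$ only probes indices $m \leqslant n-k < n$.

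Next, the generalized criterion (\ref{Invers0bis}) reduces the theorem to verifying
\[
[x^{N}] f_n(-x) g_k(x) = \delta(N), \qquad N := n - k \geqslant 0.
\]
The case $N = 0$ is immediate since $f_n(0) = g_k(0) = 1$. For $N \geqslant 1$, I would expand the Cauchy product and apply the identities $(-n)_m = (-1)^m n!/(n-m)!$ and $(k)_{N-m} = (n-m-1)!/(k-1)!$ (valid since $k + N - m - 1 = n - m - 1$) to collapse the target coefficient to
\[
\frac{(k-1)!}{n! \, N!} \sum_{m=0}^{N} (n-m) \binom{N}{m} (-n\nu)_m (k\nu)_{N-m}.
\]

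The algebraic core, which I expect to be the only real work, is to show this last sum vanishes. I would split $(n-m) = n - m$ into two pieces and apply the Chu-Vandermonde identity for Pochhammer symbols,
\[
\sum_{m=0}^{N} \binom{N}{m} (a)_m (b)_{N-m} = (a+b)_N,
\]
to the ``$n$'' piece, yielding $n \, (-N\nu)_N$. On the ``$m$'' piece, using $m \binom{N}{m} = N \binom{N-1}{m-1}$, shifting the summation index, and extracting the leading factor from $(-n\nu)_m = (-n\nu)(-n\nu+1)_{m-1}$, a second application of Chu-Vandermonde yields $-N n \nu \, (1 - N\nu)_{N-1}$. Since $(-N\nu)_N = (-N\nu)(1-N\nu)_{N-1}$, the combined total factors as $n (1-N\nu)_{N-1} \, [-N\nu + N\nu] = 0$, which establishes the criterion and hence the inversion (\ref{eq:inversionR}).

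I do not anticipate any conceptual obstacle beyond this identity: once Proposition \ref{theoMaininversionR} (in its extended form) reduces inversion to a single scalar identity in $n$, $k$, $\nu$, that identity succumbs to two careful applications of Chu-Vandermonde.
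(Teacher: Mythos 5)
Your proposal is correct, and it shares the paper's overall strategy: both reduce Theorem \ref{PropIn} to the generalized criterion (\ref{Invers0bis}) with the same identifications $a_{m;n}=(-n\nu)_m/(-n)_m$ and $b_{m;k}=(k\nu)_m/(k)_m$, and your remark that only indices $m\leqslant n-k<n$ are probed (so the vanishing of $(-n)_m$ for $m>n$ is harmless) is the same observation the paper makes implicitly. Where you diverge is in the verification of the key identity $[x^{n-k}]f_n(-x)g_k(x)=\delta(n-k)$. The paper rewrites the Pochhammer ratios as Gamma quotients, splits the sum (via $n-m=k+(n-k-m)$) into two pieces $X^{(n,k)}_{n-k}$ and $Y^{(n,k)}_{n-k}$, identifies each as a coefficient in a product of binomial series (the second involving a derivative $x\,\frac{\mathrm{d}}{\mathrm{d}x}(1-x)^{-k\nu}$), and lands on the closed form $U^{(n,k)}_{n-k}=\frac{k!}{n!}[x^{n-k}](1-x)^{(n-k)\nu-1}(1+(\nu-1)x)$, whose coefficient is then shown to telescope to zero for $n>k$. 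You instead collapse the coefficient to $\frac{(k-1)!}{n!\,N!}\sum_{m=0}^{N}(n-m)\binom{N}{m}(-n\nu)_m(k\nu)_{N-m}$ and kill it with two applications of the Chu--Vandermonde convolution $\sum_m\binom{N}{m}(a)_m(b)_{N-m}=(a+b)_N$ (once after $m\binom{N}{m}=N\binom{N-1}{m-1}$ and the shift), obtaining $n(1-N\nu)_{N-1}\bigl[-N\nu+N\nu\bigr]=0$; I checked the algebra and it is sound, including the step $(k)_{N-m}=(n-m-1)!/(k-1)!$ which uses $k\geqslant1$ and $k+N=n$. The two computations are of course cousins (Vandermonde's convolution is exactly the coefficient identity behind $(1-x)^{-a}(1-x)^{-b}=(1-x)^{-a-b}$), but yours is shorter and avoids Gamma-function manipulations and the derivative trick, at the cost of not producing the paper's explicit closed form $(1-x)^{(n-k)\nu-1}(1+(\nu-1)x)$, which is a pleasant byproduct but is not reused elsewhere.
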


\noindent
\begin{remark}
\textbf{a)} Note that the factor $F(k-n,-n\nu;-n;x)$ in the definition 
(\ref{DefABxNU}) of matrix $A(x,\nu)$ is always well-defined although the third argument $-n$ is a negative integer; in fact, given 
$1 \leqslant k \leqslant n$, write by definition (\cite{NIST10}, 15.2.1) 
\begin{equation}
F(k-n,-n\nu;-n;x) = 
\sum_{m=0}^{n-k} \frac{(k-n)_m(-n\nu)_m}{(-n)_m \, m!} x^m
\label{DefFpoly}
\end{equation}
and the denominator $(-n)_m = (-1)^m n!/(n-m)!$ therefore never vanishes for 
all indexes $m \leqslant n-k < n$;

\textbf{b)} the polynomial factors $F(k-n,-n\nu;-n;x)$ and $F(k-n,k\nu;k;x)$ respectively intervening in coefficients $A_{n,k}(x,\nu)$ and $B_{n,k}(x,\nu)$ in definition (\ref{DefABxNU}) are deduced from each other by the substitution 
$k \leftrightarrow -n$. This simple substitution, however, does not leave the remaining factor $\binom{n}{k}$ invariant and thus cannot carry out by itself the inversion scheme (\ref{eq:inversionR}). 
\end{remark}

\begin{proof}
To show that $A(x,\nu)B(x,\nu) = \mathrm{Id}$, it is sufficient to verify  criterion (\ref{Invers0bis}). From (\ref{DefABbis}), we first specify the associated sequences $(a_{m;n,k})_{m \in \mathbb{N}}$ and 
$(b_{m;n,k})_{m \in \mathbb{N}}$ for a given pair $(n,k)$. On one hand, 
(\ref{DefFpoly}) entails $a_{m;n} = (-n\nu)_m/(-n)_m$, $m \geqslant 0$, 
for given $n \in \mathbb{N}^*$ and, in particular, $a_{0;n} = 1$; similarly, write
\begin{equation}
F(k-n,k\nu,k;x) = \sum_{m=0}^{n-k} \frac{(k-n)_m(k\nu)_m}{(k)_m \, m!} \, x^m 
\label{DefFFbis}
\end{equation}
so that
$b_{m;k} = (k\nu)_m/(k)_m$, $m \geqslant 0$, for given $k \in \mathbb{N}^*$ with $b_{0;k} = 1$. Let $f_n$ and $g_{k}$ respectively denote the exponential generating function of these sequences $(a_{m;n})_{m \geqslant 0}$ and 
$(b_{m;k})_{m \geqslant 0}$; the product $f_n(-x)g_{k}(x)$ is then given by
\begin{align}
f_n(-x)g_{k}(x) & \, = \left ( \sum_{m \geqslant 0} (-1)^m 
\frac{a_{m;n}}{m!} \, x^m \right ) 
\left ( \sum_{m \geqslant 0} \frac{b_{m;k}}{m!} \, x^m \right )
\nonumber \\
& \, = 
\sum_{m=0}^{+\infty} (-1)^m \frac{(-n\nu)_m}{(-n)_m \, m!} \, x^m \cdot 
\sum_{m=0}^{+\infty} \frac{(k\nu)_m}{(k)_m \, m!} \, x^m = 
\sum_{\ell \geqslant 0} U_\ell^{(n,k)} \, x^\ell
\nonumber
\end{align}
where
\begin{equation}
U_\ell^{(n,k)} = \sum_{m=0}^\ell (-1)^m \frac{(-n\nu)_m}{(-n)_m \, m!} 
\frac{(k\nu)_{\ell-m}}{(k)_{\ell-m} \, (\ell-m)!}, \qquad \ell \geqslant 0.
\label{DefU}
\end{equation}
Let then $n \geqslant k$; from expression (\ref{DefU}), we derive
\begin{align}
U_{n-k}^{(n,k)} = & \, \sum_{m=0}^{n-k} (-1)^m \frac{(-n\nu)_m}{(-n)_m \, m!} \cdot 
\frac{(k\nu)_{n-k-m}}{(k)_{n-k-m} \, (n-k-m)!}
\nonumber \\
= & \, \sum_{m=0}^{n-k} (-1)^m \frac{\Gamma(m-n\nu)}{\Gamma(-n\nu)} 
\cdot \frac{(-1)^m(n-m)!}{n!} \cdot \frac{1}{m!} \cdot 
\frac{\Gamma(n-k-m+k\nu)}{\Gamma(k\nu)} \; \times 
\nonumber \\
& \qquad \; \; \; 
\frac{\Gamma(k)}{\Gamma(n-k-m+k)} \cdot \frac{1}{(n-k-m)!}
\nonumber
\end{align}
after writing the Pochhammer symbol $(c)_m = \Gamma(m+c)/\Gamma(c)$ for 
$c \notin -\mathbb{N}$ and noting that $(-n)_m = (-1)^m n!/(n-m)!$. Reducing the latter expression of $U_{n-k}^{(n,k)}$ gives
\begin{align}
U_{n-k}^{(n,k)} = & \, \frac{\Gamma(k)}{n!\Gamma(-n\nu)\Gamma(k\nu)} 
\sum_{m=0}^{n-k} (n-m) \frac{\Gamma(m-n\nu)\Gamma(n-k-m+k\nu)}{m!(n-k-m)!}
\nonumber \\
= & \, \frac{\Gamma(k)}{n!\Gamma(-n\nu)\Gamma(k\nu)} 
(X_{n-k}^{(n,k)} + Y_{n-k}^{(n,k)})
\label{DefUBIS}
\end{align}
where we introduce the sums (after decomposing $n-m = k + (n-m-k)$)
$$
\left\{
\begin{array}{ll}
X_{n-k}^{(n,k)} = \displaystyle k \cdot \sum_{m=0}^{n-k} 
\frac{\Gamma(m-n\nu)\Gamma(n-k-m+k\nu)}{m!(n-k-m)!},
\\ \\
Y_{n-k}^{(n,k)} = \displaystyle \sum_{m=0}^{n-k} 
(n-m-k) \cdot \frac{\Gamma(m-n\nu)\Gamma(n-k-m+k\nu)}{m!(n-k-m)!}.
\end{array} \right.
$$
To calculate $X_{n-k}^{(n,k)}/k$, note that this equals to the coefficient of 
$x^{n-k}$ in the power series expansion of the product 
\begin{align}
& \left ( \sum_{m=0}^{+\infty} \frac{\Gamma(m-n\nu)}{m!} \, x^m \right ) 
\left ( \sum_{m=0}^{+\infty} \frac{\Gamma(m+k\nu)}{m!} \, x^m \right ) \; = 
\nonumber \\
& \left ( \sum_{m=0}^{+\infty} \frac{\Gamma(-n\nu)(-n\nu)_m}{m!} \, x^m \right )
\left ( \sum_{m=0}^{+\infty} \frac{\Gamma(k\nu)(k\nu)_m}{m!} \, x^m \right ) = 
\frac{\Gamma(-n\nu)}{(1-x)^{-n\nu}} \cdot \frac{\Gamma(k\nu)}{(1-x)^{k\nu}}
\nonumber
\end{align}
so that
\begin{equation}
X_{n-k}^{(n,k)} = k \, \Gamma(-n\nu)\Gamma(k\nu) \cdot 
[x]^{n-k} \left \{ \frac{(1-x)^{n\nu}}{(1-x)^{k\nu}} \right \}.
\label{DefUTER}
\end{equation}
As to the sum $Y_{n-k}^{(n,k)}$, it equals the coefficient of $x^{n-k}$ in the power series expansion of the product 
$$
\left ( \sum_{m=0}^{+\infty} \frac{\Gamma(m-n\nu)}{m!} \, x^m \right ) \cdot 
x \, \frac{\mathrm{d}}{\mathrm{d}x} 
\left [ \frac{\Gamma(k\nu)}{(1-x)^{k\nu}} \right ] = 
\frac{\Gamma(-n\nu)}{(1-x)^{-n\nu}} \times x \, 
\Gamma(k\nu) \, \frac{k\nu}{(1-x)^{k\nu+1}}
$$
so that
\begin{equation}
Y_{n-k}^{(n,k)} = \Gamma(-n\nu)\Gamma(k\nu+1) \cdot [x]^{n-k} 
\left \{ \frac{x(1-x)^{n\nu}}{(1-x)^{k\nu+1}} \right \}.
\label{DefUQUATER}
\end{equation}
Using formulas (\ref{DefUTER}) and (\ref{DefUQUATER}) for sums 
$X_{n-k}^{(n,k)}$ and $Y_{n-k}^{(n,k)}$, the expression (\ref{DefUBIS}) for 
$U_{n-k}^{(n,k)}$ then easily reduces to
\begin{align}
U_{n-k}^{(n,k)} = & \, \frac{[x]^{n-k}}{n!} \left \{ \Gamma(k+1) 
\frac{(1-x)^{n\nu}}{(1-x)^{k\nu}} + k \nu \, \Gamma(k) 
\frac{x(1-x)^{n\nu}}{(1-x)^{k\nu+1}} \right \}
\nonumber \\
= & \, \frac{k!}{n!} \, 
\Bigl \{ [x^{n-k}] (1-x)^{(n-k)\nu-1} (1+(\nu-1)x) \Bigr \}, 
\qquad n \geqslant k.
\label{DefU5}
\end{align}
With the series expansion $(1-x)^{(n-k)\nu-1} = \sum_{\ell \geqslant 0} x^\ell 
(1-(n-k)\nu)_\ell/\ell!$, expression (\ref{DefU5}) for $n-k \geqslant 1$ then gives
\begin{align}
U_{n-k}^{(n,k)} = & \frac{k!}{n!} \left \{ \frac{(1-(n-k)\nu)_{n-k}}{(n-k)!} + 
(\nu - 1) \frac{(1-(n-k)\nu)_{n-k-1}}{(n-k-1)!} \right \} 
\nonumber \\
= & \frac{k!}{n!} \frac{1}{\Gamma(1-(n-k)\nu)} \frac{1}{(n-k)!} 
\Bigl \{ \Gamma(1-(n-k)\nu +n-k) \; + 
\nonumber \\
& \qquad \qquad \qquad \qquad \qquad \qquad 
(\nu-1)\Gamma((n-k)(1-\nu))(n-k) \Bigr \}
\nonumber
\end{align}
by definition of the Pochhammer symbol, and the relation 
$\Gamma(1+z) = z \Gamma(z)$ applied to the argument 
$z = (n-k)\nu +n-k = (n-k)(1-\nu)$ entails 
\begin{align}
U_{n-k}^{(n,k)} = & 
\frac{k!}{n!} \frac{1}{\Gamma(1-(n-k)\nu)} \frac{1}{(n-k)!} 
\Bigl \{ (n-k)(1-\nu)\Gamma((n-k)(1-\nu)) \; + 
\nonumber \\
& \qquad \qquad \qquad \qquad \qquad \qquad 
(\nu-1)\Gamma((n-k)(1-\nu))(n-k) \Bigr \}
\nonumber
\end{align}
so that $U_{n-k}^{(n,k)} = 0$ for $n - k \geqslant 1$. Now if $n = k$, 
(\ref{DefU5}) reduces to
$$
U_{n-k}^{(n,k)} = [x^0] \left \{ 1 + \frac{\nu \, x}{1-x} \right \} = 1.
$$
The inversion condition (\ref{Invers0bis}) for 
$U_{n-k}^{(n,k)} = [x]^{n-k}f_n(-x)g_{k}(x)$ is therefore fulfilled for all $n, \, k \geqslant 1$ and we conclude that inverse relation 
(\ref{eq:inversionR}) holds for any pair of sequences $(S_n)_{n \geqslant 1}$ and $(T_n)_{n \geqslant 1}$.
\end{proof}


\section{Generating functions}
\label{GF}

As a direct consequence of Theorem \ref{PropIn}, remarkable functional relations can be derived for the ordinary (resp. exponential) generating functions of sequences related by the inversion formula. We first address ordinary generating functions and state the following reciprocal relations.

\begin{corol}
\textbf{For given complex parameters $x$ and $\nu$, let 
$(S_n)_{n \in \mathbb{N}^*}$ and $(T_n)_{n \in \mathbb{N}^*}$ be sequences related by the inversion formulas (\ref{eq:inversionR}) of Theorem 
\ref{PropIn}, that is, 
$S = B(x,\nu) \cdot T \Leftrightarrow T = A(x,\nu) \cdot S$.}

\textbf{Denote by $\mathfrak{G}_S(z)$ and $\mathfrak{G}_T(z)$ the formal ordinary generating series of $S$ and $T$, respectively. Defining the mapping 
$\Xi$ (depending on parameters $x$ and $\nu$) by}
\begin{equation}
\Xi(z) = \frac{z}{z-1}\Bigl(\frac{1-z}{1-z(1-x)}\Bigr)^{\nu},
\label{DefXi}
\end{equation}
\textbf{the relation}
\begin{equation}
\mathfrak{G}_S(z) = \left [ \frac{1-\nu}{1-z} + \frac{\nu}{1-z(1-x)} \right ]
\mathfrak{G}_T(\Xi(z))
\label{eq:OGFRelation}
\end{equation}
\textbf{holds. Conversely, $\mathfrak{G}_T$ is given in terms of 
$\mathfrak{G}_S$ by}
\begin{equation}
\mathfrak{G}_T(\xi) = \mathfrak{G}_S(\Omega(\xi)) 
\left [ \frac{1-\nu}{1-\Omega(\xi)} + 
\frac{\nu}{1-(1-x)\Omega(\xi)} \right ]^{-1}
\label{eq:OGFRelationBIS}
\end{equation}
\textbf{where $\Omega$ is the inverse mapping 
$\Xi(z) = \xi \Leftrightarrow z = \Omega(\xi)$.} 
\label{corOGF}
\end{corol}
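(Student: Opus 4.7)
The identity (\ref{eq:OGFRelation}) is linear in the sequence $(T_k)_{k \geqslant 1}$, so the plan is to reduce its proof to the computation, for each $k \geqslant 1$, of the column generating series
\[
\beta_k(z) \;=\; \sum_{n \geqslant k} B_{n,k}(x,\nu) \, z^n.
\]
Indeed, applying $S_n = \sum_{k=1}^{n} B_{n,k}(x,\nu) T_k$ from Theorem \ref{PropIn} and swapping summation orders in $\mathfrak{G}_S(z)$ gives $\mathfrak{G}_S(z) = \sum_{k \geqslant 1} T_k \, \beta_k(z)$; the claim then reduces to showing $\beta_k(z) = \Lambda(z) \, \Xi(z)^k$ for all $k \geqslant 1$, where $\Lambda(z) = \frac{1-\nu}{1-z} + \frac{\nu}{1-z(1-x)}$ denotes the bracketed prefactor in (\ref{eq:OGFRelation}).

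To evaluate $\beta_k(z)$, my plan is to substitute the hypergeometric series expansion of $F(k-n, k\nu; k; x)$ into $B_{n,k}(x,\nu)$, set $n = k+j$, and interchange the summations over $j$ and $m$. After simplifying the binomial and Pochhammer prefactors, the inner $j$-sum at fixed $m$ is, up to a shift, of the elementary type $\sum_{i \geqslant 0} \binom{k+m+i}{i} z^i = (1-z)^{-(k+m+1)}$, and the factor $(k)_m$ from the denominator of the hypergeometric series combines with the remaining factorials to produce the simple coefficient $(k+m)/k$ in front of $(k\nu)_m/m!$.

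The residual $m$-series then has the form $\sum_{m \geqslant 0} (-1)^m \frac{k+m}{k} \frac{(k\nu)_m}{m!} y^m$ with $y = xz/(1-z)$. Splitting $(k+m)/k = 1 + m/k$ decomposes it into the binomial series $(1+y)^{-k\nu}$ plus the derivative-times-$y$ contribution $-\nu y (1+y)^{-k\nu - 1}$, whose $k$-dependence is confined to the common exponent. Using the identity $1 + y = (1-z(1-x))/(1-z)$, the result factorizes as $\Xi(z)^k$ times a rational function independent of $k$; a partial-fraction decomposition identifies this residual factor with $\Lambda(z)$, completing the proof of (\ref{eq:OGFRelation}).

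For the converse formula (\ref{eq:OGFRelationBIS}), I would observe that (\ref{DefXi}) defines $\Xi$ as a formal power series with $\Xi(0) = 0$ and leading coefficient equal to $-1 \neq 0$, so $\Xi$ admits a compositional inverse $\Omega$ in the ring of formal power series; substituting $z = \Omega(\xi)$ into (\ref{eq:OGFRelation}) and dividing by the non-vanishing factor $\Lambda(\Omega(\xi))$ directly yields the stated expression for $\mathfrak{G}_T(\xi)$. The main obstacle is the third step: one must correctly isolate the $k$-th power structure $\Xi(z)^k$ from the $m$-series, which is not obvious before the split via $(k+m)/k$ makes the derivative-of-binomial piece appear.
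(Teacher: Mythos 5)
Your plan is correct and follows essentially the same route as the paper's proof: swap the summation order to isolate the column series in $k$, expand $F(k-n,k\nu;k;x)$, shift $n=k+j$ and sum the elementary $j$-series to $(1-z)^{-(k+m+1)}$, then split the residual factor $(k+m)/k$ into the binomial series $(1-Z)^{-k\nu}$ and its derivative to produce $\Xi(z)^k$ times the prefactor $\Lambda(z)$. The only (immaterial) difference is in the converse step, where you invert $\Xi$ as a formal power series with leading coefficient $-1$, while the paper invokes the analytic inverse via $\Xi'(0)=-1$ and the implicit function theorem.
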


\begin{proof}
\textbf{a)} From the definition (\ref{DefABxNU}) of matrix $B(x,\nu)$, the generating function of the sequence $S = B(x,\nu) \cdot T$ is given by
\begin{align}
\mathfrak{G}_S(z) & \, = \sum_{n \geqslant 1} z^n 
\left ( \sum_{k=1}^n B_{n,k}(x,\nu) T_k \right ) = 
\left ( \sum_{k=1}^n (-1)^k \frac{n!}{k!(n-k)!} F(k-n,k\nu;k;x) T_k \right ) 
\nonumber \\
& \, = \sum_{k \geqslant 1} (-1)^k T_k \frac{z^k}{k!} \sum_{n \geqslant k} 
\frac{n!}{(n-k)!} F(k-n,k\nu;k;x) \, z^{n-k}
\nonumber
\end{align}
after changing the summation order; using the expression (\ref{DefFFbis}) for the Hypergeometric coefficient $F(k-n,k\nu;k;x)$, we then obtain
\begin{align}
\mathfrak{G}_S(z) & = \sum_{k \geqslant 1} (-1)^k T_k \frac{z^k}{k!} 
\sum_{n \geqslant k} \frac{n! \, z^{n-k}}{(n-k)!} 
\nonumber \\
& \quad \sum_{m=0}^{n-k} \frac{(-1)^m(n-k)!}{(n-k-m)!}
\frac{\Gamma(m+k\nu)}{\Gamma(k\nu)} \frac{(k-1)!}{(m+k-1)!}\frac{x^m}{m!}
\nonumber \\
& = \sum_{k \geqslant 1} (-1)^k T_k \frac{z^k}{k} \sum_{n \geqslant k} 
n! \, z^{n-k} \sum_{m=0}^{n-k} \frac{(-1)^m}{(n-k-m)!}
\frac{\Gamma(m+k\nu) \, x^m}{\Gamma(k\nu)m!} \frac{1}{(m+k-1)!}
\nonumber
\end{align}
and the index change $n = k + r$, $r \geqslant 0$, yields
\begin{align}
\mathfrak{G}_S(z) & = \sum_{k \geqslant 1} (-1)^k T_k \frac{z^k}{k} 
\sum_{r \geqslant 0} (k+r)! \, z^{r} \sum_{m=0}^{r} \frac{(-1)^m}{(n-k-m)!}
\frac{(k\nu)_m \, x^m}{m!} \frac{1}{(m+k-1)!}
\nonumber \\
& = \sum_{k \geqslant 1} (-1)^k T_k \frac{z^k}{k} 
\sum_{m \geqslant 0} (-1)^m \, \frac{(k\nu)_m \, x^m}{m!} \frac{1}{(m+k-1)!}
\left ( \sum_{r = m}^{+\infty} \frac{(k+r)!}{(r-m)!} \, z^{r} \right )
\nonumber
\end{align}
where the last sum on index $r$ readily equals
$$
\sum_{r = m}^{+\infty} \frac{(k+r)!}{(r-m)!} \, z^{r} = 
\sum_{r = 0}^{+\infty} \frac{(k+m+r)!}{r!} \, z^{r+m} = 
\frac{(m+k)!}{(1-z)^{k+m+1}} \cdot z^m, \qquad \vert z \vert < 1;
$$
the latest expression of $\mathfrak{G}_S(z)$ consequently reads
\begin{align}
\mathfrak{G}_S(z) = & \, \sum_{k \geqslant 1} 
(-1)^k T_k \frac{z^k}{k} \frac{1}{(1-z)^{k+1}} 
\sum_{m \geqslant 0} (-1)^m \, \frac{(k\nu)_m}{m!} 
\left ( \frac{x\, z}{1-z} \right)^m (m+k)
\nonumber \\
= & \; \frac{1}{1-z} 
\sum_{k \geqslant 1} \frac{T_k}{k} \left ( \frac{z}{z-1} \right )^k
\Bigl [ - \frac{xz}{1-z} 
\sum_{m \geqslant 0} m \left ( \frac{- xz}{1-z} \right )^{m-1} 
\frac{(k\nu)_m}{m!}
\nonumber \\
& \qquad \qquad \qquad \qquad \qquad \; \; + 
k \times \sum_{m \geqslant 0} \left ( \frac{- xz}{1-z} \right )^{m-1} 
\frac{(k\nu)_m}{m!} \Bigr ].
\label{Sum1}
\end{align}
Using successively identity 
$\sum_{m \geqslant 0} (k\nu)_m Z^m/m! = 1/(1-Z)^{k\nu}$ and its term-to-term derivative $\sum_{m \geqslant 0} m (k\nu)_m Z^{m-1}/m! = k\nu/(1-Z)^{k\nu + 1}$ with respect to $Z$, the sum (\ref{Sum1}) reduces to
\begin{align}
\mathfrak{G}_S(z) = & \, \frac{1}{1-z} \left [ \frac{- xz}{1-z} 
\left ( \frac{1-z}{1-(1-x)z} \right ) \nu \cdot \mathfrak{G}_T(\Xi(z)) + \mathfrak{G}_T(\Xi(z)) \right ]
\nonumber \\
= & \, \frac{1}{1-z} \left [ \frac{- \nu xz}{1-(1-x)z} + 1 \right ] 
\mathfrak{G}_T(\Xi(z))
\nonumber
\end{align}
with $\Xi(z)$ defined as in (\ref{DefXi}). Writing 
$$
\frac{1}{1-z} \left [ \frac{- \nu xz}{1-(1-x)z} + 1 \right ] = 
\frac{1-\nu}{1-z} + \frac{\nu}{1-z(1-x)}
$$
eventually entails relation (\ref{eq:OGFRelation}). 

\textbf{b)} For any parameters $x$ and $\nu$, the function 
$z \mapsto \Xi(z)$ is analytic in a neigborhood of $z = 0$, with 
$\Xi(0) = 0$ and $\Xi'(z) \sim -z$ as $z \downarrow 0$, hence 
$\Xi'(0) = -1 \neq 0$. By the Implicit Function Theorem, $\Xi$ has an analytic inverse $\Omega:\xi \mapsto \Omega(\xi)$ in a neighborhood of 
$\xi = 0$ and the inversion of (\ref{eq:OGFRelation}) provides 
(\ref{eq:OGFRelationBIS}), as claimed. 
\end{proof}

\noindent 
Relation (\ref{eq:OGFRelationBIS}) between formal generating series can also be understood as a functional identity between the analytic functions  
$z \mapsto \mathfrak{G}_S(z)$ and $z \mapsto \mathfrak{G}_T(z)$ in some neighborhood of the origin $z = 0$ in the complex plane. Now, Corollary 
\ref{corOGF} can be supplemented by making explicit the inverse mapping 
$\Omega$ involved in the reciprocal relation (\ref{eq:OGFRelationBIS}); to this end, we state some preliminary properties (in the sequel, $\log$ will denote the determination of the logarithm in the complex plane cut along the negative semi-axis $]-\infty,0]$ with $\log(1) = 0$).

\begin{lemma}
\textbf{a) Let $R(\nu) = \vert e^{-\psi(\nu)} \vert$ where}
$$
\psi(\nu) = \left\{
\begin{array}{ll}
(1-\nu)\log(1-\nu) + \nu\log(-\nu), \quad \; 
\nu \in \mathbb{C} \setminus \, [0,+\infty[,
\\ \\
(1-\nu)\log(1-\nu) + \nu\log(\nu), \; \quad \; \; \, 
\nu \in \mathbb{R}, \; 0 \leqslant \nu < 1, 
\\ \\
(1-\nu)\log(\nu-1) + \nu\log(\nu), \; \quad \; \; 
\nu \in \mathbb{R}, \; \nu \geqslant 1.
\end{array} \right.
$$
\textbf{The power series}
$$
\pmb{\Sigma}(w) = 
\sum_{b \geqslant 1} 
\frac{\Gamma(b(1-\nu))}{\Gamma(b)\Gamma(1-b\nu)} \cdot w^b, 
\qquad \vert w \vert < R(\nu),
$$
\textbf{is given by}
\begin{equation}
\pmb{\Sigma}(w) = \frac{\Theta(w)-1}{\nu \, \Theta(w) + 1 - \nu}
\label{U0}
\end{equation}
\textbf{where $\Theta:w \mapsto \Theta(w)$ denotes the unique analytic solution 
(depending on $\nu$) to the implicit equation}
\begin{equation}
1 - \Theta + w \cdot \Theta^{1-\nu} = 0, 
\qquad \vert w \vert < R(\nu),
\label{DefTheta}
\end{equation}
\textbf{verifying $\Theta(0) = 1$.}

\textbf{b) Function $\pmb{\Sigma}$ is the solution to the differential equation}
\begin{align}
w \, \pmb{\Sigma}'(w) = & \, \pmb{\Sigma}(w) 
\left [1 - \nu \, \pmb{\Sigma}(w) \right ]  
\left [1 + (1-\nu)\pmb{\Sigma}(w) \right ]
\nonumber \\
= & \, \pmb{\Sigma}(w) \left [ 1 + (1-2\nu)\pmb{\Sigma}(w) - 
\nu(1-\nu)\pmb{\Sigma}(w)^2 \right ]
\label{EDiff0}
\end{align}
\textbf{with $\pmb{\Sigma}(0) = 0$.}
\label{lemmU}
\end{lemma}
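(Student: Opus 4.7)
For part (a), the plan is to identify $\pmb{\Sigma}$ with $w$ times the logarithmic derivative of $\Theta$, which reduces the computation of $[w^n]\pmb{\Sigma}$ to a single Lagrange--B\"urmann inversion. I would first apply the Implicit Function Theorem to $H(w,\Theta)=1-\Theta+w\Theta^{1-\nu}$ at $(0,1)$, where $\partial_\Theta H(0,1)=-1\neq 0$, obtaining the unique analytic solution $\Theta$ of~(\ref{DefTheta}) near $w=0$ with $\Theta(0)=1$; standard singularity analysis, locating the nearest zero of $\partial_\Theta H$, would then extend $\Theta$ analytically throughout $\{|w|<R(\nu)\}$, and working out that critical point reproduces the piecewise definition of $\psi$ through the choice of branch.

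The crucial identity to establish is
\[
\tilde{\pmb{\Sigma}}(w)\;:=\;\frac{\Theta(w)-1}{\nu\Theta(w)+1-\nu}\;=\;w\,\frac{\Theta'(w)}{\Theta(w)}.
\]
Implicit differentiation of $\Theta=1+w\Theta^{1-\nu}$ gives $\Theta'[1-w(1-\nu)\Theta^{-\nu}]=\Theta^{1-\nu}$; replacing $w\Theta^{-\nu}=(\Theta-1)/\Theta$ (a rearrangement of the implicit equation) turns the bracket into $(\nu\Theta+1-\nu)/\Theta$, so $\Theta'=\Theta^{2-\nu}/(\nu\Theta+1-\nu)$, and multiplying by $w/\Theta$ together with $w\Theta^{1-\nu}=\Theta-1$ yields the claim. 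I would then set $u=\Theta-1$, so that $u=w(1+u)^{1-\nu}$, and apply the Lagrange--B\"urmann formula with $F(u)=\log(1+u)$, $F'(u)=1/(1+u)$:
\[
[w^n]\log\Theta\;=\;\frac{1}{n}[u^{n-1}](1+u)^{n(1-\nu)-1}\;=\;\frac{1}{n}\binom{n(1-\nu)-1}{n-1}\;=\;\frac{\Gamma(n(1-\nu))}{n!\,\Gamma(1-n\nu)}.
\]
Since $\tilde{\pmb{\Sigma}}(w)=w(\log\Theta)'(w)$, the coefficient $[w^n]\tilde{\pmb{\Sigma}}=n\,[w^n]\log\Theta$ equals precisely $\Gamma(n(1-\nu))/[\Gamma(n)\Gamma(1-n\nu)]$, the defining coefficient of $\pmb{\Sigma}$; this proves~(a).

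For part (b), once (a) is available, the ODE reduces to a short algebraic check. I would differentiate $\pmb{\Sigma}=(\Theta-1)/(\nu\Theta+1-\nu)$ using the already-established $\Theta'=\Theta^{2-\nu}/(\nu\Theta+1-\nu)$ to obtain $\pmb{\Sigma}'(w)=\Theta^{2-\nu}/(\nu\Theta+1-\nu)^3$, whence $w\pmb{\Sigma}'(w)=(\Theta-1)\Theta/(\nu\Theta+1-\nu)^3$ after one more use of $w\Theta^{1-\nu}=\Theta-1$. The algebraic simplifications $1-\nu\pmb{\Sigma}=1/(\nu\Theta+1-\nu)$ and $1+(1-\nu)\pmb{\Sigma}=\Theta/(\nu\Theta+1-\nu)$, both immediate from the definition of $\pmb{\Sigma}$, then show that $\pmb{\Sigma}(1-\nu\pmb{\Sigma})(1+(1-\nu)\pmb{\Sigma})$ equals the very same $(\Theta-1)\Theta/(\nu\Theta+1-\nu)^3$, matching $w\pmb{\Sigma}'$; the expanded second form in~(\ref{EDiff0}) is just the expansion $(1-\nu\pmb{\Sigma})(1+(1-\nu)\pmb{\Sigma})=1+(1-2\nu)\pmb{\Sigma}-\nu(1-\nu)\pmb{\Sigma}^2$.

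The main obstacle I foresee is spotting the logarithmic-derivative identity $\tilde{\pmb{\Sigma}}=w(\log\Theta)'$ at the outset. Without it, rewriting~(\ref{U0}) via~(\ref{DefTheta}) leads to the alternative implicit relation $\pmb{\Sigma}=w(1-\nu\pmb{\Sigma})^{\nu}(1+(1-\nu)\pmb{\Sigma})^{1-\nu}$, and a direct Lagrange inversion on it requires evaluating $[S^{n-1}](1-\nu S)^{n\nu}(1+(1-\nu)S)^{n(1-\nu)}$, a two-factor coefficient extraction that does not simplify transparently. The logarithmic-derivative rewrite collapses this into the single-factor Lagrange computation $(1/n)\binom{n(1-\nu)-1}{n-1}$, which reduces cleanly to the target Gamma-function expression.
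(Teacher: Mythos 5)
Your proposal is correct, and for part (a) it follows a genuinely different route from the paper. The paper computes the radius of convergence $R(\nu)$ directly from Stirling asymptotics of the coefficients $\Gamma(b(1-\nu))/[\Gamma(b)\Gamma(1-b\nu)]$ (three cases in $\nu$), and then obtains the closed form (\ref{U0}) by recognizing the coefficient as $-\tfrac{1}{\nu}\binom{-1+b(1-\nu)}{b}$ and citing the Polya--Szeg\H{o} identity $1+\sum_{b\geqslant 1}\binom{\alpha+b\beta}{b}w^b=\Theta^{\alpha+1}/[(1-\beta)\Theta+\beta]$ with $\alpha=-1$, $\beta=1-\nu$, plus analytic continuation in $\nu$. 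You instead make the argument self-contained: the observation $(\Theta-1)/(\nu\Theta+1-\nu)=w\,\Theta'/\Theta$ (which I checked: implicit differentiation gives $\Theta'=\Theta^{2-\nu}/(\nu\Theta+1-\nu)$, and $w\Theta^{1-\nu}=\Theta-1$ does the rest) reduces everything to a single Lagrange--B\"urmann extraction $[w^n]\log\Theta=\tfrac1n\binom{n(1-\nu)-1}{n-1}=\Gamma(n(1-\nu))/[n!\,\Gamma(1-n\nu)]$, which after multiplying by $n$ matches the defining coefficients of $\pmb{\Sigma}$ exactly; in effect you re-derive the needed special case of the Polya--Szeg\H{o} formula rather than quote it. What your route leaves only sketched is the identification of $R(\nu)=\vert e^{-\psi(\nu)}\vert$ as the actual radius: your ``nearest critical point'' argument does work (the branch point sits at $\Theta_c=(\nu-1)/\nu$, $w_c=\Theta_c^{\,\nu}/(1-\nu)$, and $\vert w_c\vert=\vert e^{-\psi(\nu)}\vert$ in each regime of $\nu$), but you would need to carry out that case analysis, which is precisely what the paper's Stirling computation delivers; since the coefficient identity already gives (\ref{U0}) on the common disk of analyticity, this is a presentational rather than a logical gap. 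Part (b) of your argument is essentially the paper's: both differentiate the implicit equation, get $w\pmb{\Sigma}'=(\Theta-1)\Theta/(\nu\Theta+1-\nu)^3$, and use the rational expressions $1-\nu\pmb{\Sigma}=1/(\nu\Theta+1-\nu)$ and $1+(1-\nu)\pmb{\Sigma}=\Theta/(\nu\Theta+1-\nu)$ to land on (\ref{EDiff0}).
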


\noindent 
The proof of Lemma \ref{lemmU} is detailed in Appendix \ref{A3}. Quite remarkably, function $\pmb{\Sigma}$ will also prove useful in Section 
\ref{SecA} for the derivation of the generating function of the solution $E$ to the particular system (\ref{T0}). 

\begin{corol}
\textbf{For all $\nu \in \mathbb{C}$ and $x \neq 0$, the inverse mapping 
$\Omega$ of $\Xi$ defined in (\ref{DefXi}) can be expressed by}
\begin{equation}
\Omega(\xi) = 
\frac{\pmb{\Sigma}(x \, \xi)}{(1-x(1-\nu)) \, \pmb{\Sigma}(x \, \xi) - x}, 
\qquad \vert \xi \vert < \frac{R(\nu)}{\vert x \vert},
\label{InvXI}
\end{equation}
\textbf{in terms of power series $\pmb{\Sigma}(\cdot)$ defined in Lemma 
\ref{lemmU}.}
\label{corOGFbis}
\end{corol}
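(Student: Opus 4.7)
The plan is to take a candidate $z$ defined by the right-hand side of (\ref{InvXI}), verify directly that $\Xi(z) = \xi$, and appeal to uniqueness of the local analytic inverse of $\Xi$ (established in Corollary \ref{corOGF}(b)) to conclude that this $z$ equals $\Omega(\xi)$. The normalization $\sigma|_{\xi=0}=0$ gives $z|_{\xi=0}=0=\Omega(0)$, which pins down the correct branch.

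Write $\sigma = \pmb{\Sigma}(x\xi)$ and $\Theta = \Theta(x\xi)$. From Lemma \ref{lemmU}(a), I have the two equivalent relations $\sigma = (\Theta-1)/(\nu\Theta + 1-\nu)$ and $\Theta = (1+(1-\nu)\sigma)/(1-\nu\sigma)$, together with the defining implicit equation $(\Theta-1)\Theta^{\nu-1} = x\xi$ (with the principal branch of $\Theta^{\nu-1}$, which is well-defined since $\Theta(0)=1$). Two bookkeeping identities will do all the work: $\sigma/(1-\nu\sigma) = \Theta - 1$ and $1+(1-\nu)\sigma = \Theta(1-\nu\sigma)$, both obtained immediately by clearing denominators in the two relations above.

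The heart of the argument is then to decompose $\Xi(z) = (z/(z-1)) \cdot ((1-z)/(1-z(1-x)))^{\nu}$ and simplify each factor separately in terms of $\Theta$. A direct substitution of $z = \sigma/((1-x(1-\nu))\sigma - x)$ and a routine cancellation of the common denominator give
$$z-1 = \frac{x(1+(1-\nu)\sigma)}{(1-x(1-\nu))\sigma - x}, \qquad 1-z(1-x) = \frac{x(\nu\sigma - 1)}{(1-x(1-\nu))\sigma - x}.$$
Combining these with the two bookkeeping identities above yields both $z/(z-1) = (\Theta-1)/(x\Theta)$ and $(1-z)/(1-z(1-x)) = \Theta$, so that
$$\Xi(z) = \frac{\Theta-1}{x\Theta}\cdot \Theta^{\nu} = \frac{(\Theta-1)\Theta^{\nu-1}}{x} = \frac{x\xi}{x} = \xi$$
by the defining equation for $\Theta$, as desired.

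The main potential obstacle is purely algebraic: keeping track of the several fractional expressions and of the branch of $\Theta^{\nu}$. The branch issue is resolved by the analyticity of $\Theta$ near $w=0$ with $\Theta(0)=1$ from Lemma \ref{lemmU}, which guarantees $\Theta^{\nu}$ is unambiguously the principal power in a neighborhood of $\xi=0$; the radius of validity $|x\xi| < R(\nu)$ is inherited directly from the convergence radius of $\pmb{\Sigma}$.
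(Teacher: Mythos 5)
Your proposal is correct and rests on essentially the same ingredients as the paper's proof: the relation (\ref{U0}) between $\pmb{\Sigma}$ and $\Theta$, the implicit equation (\ref{DefTheta}), and the homographic factor $(1-z)/(1-z(1-x))$, whose value at your candidate $z$ is exactly $\Theta(x\,\xi)$. The only difference is one of direction — the paper solves forward for $z=\Omega(\xi)$ in terms of $\Theta(x\,\xi)$ and then substitutes (\ref{U0}), whereas you verify the closed-form candidate and conclude by uniqueness of the local analytic inverse established in Corollary \ref{corOGF}; this is a valid reorganization, not a genuinely different argument.
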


\begin{proof}
\textbf{\textit{(i)}} The homographic transform $M:z \mapsto \theta$ with
$\theta = (1-z)/(1-z(1-x))$ has an inverse $M^{-1}$ defined by
is involutive, that is, with inverse
\begin{equation}
z = M^{-1}(\theta) = \frac{1-\theta}{1-\theta(1-x)}
\label{MobInv}
\end{equation}
(it is thus an involution). Let then $\xi = \Xi(z)$ with function $\Xi$ defined as in (\ref{DefXi}); we first claim that the corresponding 
$\theta = M(z)$ equals $\theta = \Theta(x \, \xi)$ where $\Theta$ is the function defined by the implicit equation (\ref{DefTheta}). In fact, definition (\ref{DefXi}) for $\Xi$ and expression (\ref{MobInv}) for $z$ in terms of $\theta$ together entail
$$
\xi = \Xi(z) = \frac{z}{z-1} \, \theta^{\, \nu} = 
\displaystyle \frac{1-\theta}{1-\theta(1-x)} 
\left ( \frac{1-\theta}{1-\theta(1-x)} - 1 \right )^{-1} \, \theta^{\, \nu} = 
\frac{\theta - 1}{x \, \theta} \, \theta^{\, \nu} 
$$
and the two sides of the latter equalities give 
$1 - \theta + x \xi \theta^{1-\nu} = 0$, hence the identity 
$\theta = \Theta(x \, \xi)$, as claimed. 

\textbf{\textit{(ii)}} The corresponding inverse $z = \Omega(\xi)$ can now be expressed as follows; equality (\ref{U0}) applied to $w = x \, \xi$ can be first solved for $\Theta(x \, \xi)$, giving
$$
\Theta(x \, \xi) = \frac{1 + (1-\nu)\pmb{\Sigma}(x \xi)}
{1 - \nu \, \pmb{\Sigma}(x \xi)};
$$
it then follows from (\ref{MobInv}) and this expression of $\Theta(x \, \xi)$ that
$$
z = \Omega(\xi) = \frac{1 - \Theta(x \, \xi)}{1 - (1-x)\Theta(x \, \xi)} = 
\frac{\displaystyle 1 - \frac{1 + (1-\nu)\pmb{\Sigma}(x \xi)}
{\displaystyle 1 - \nu \, \pmb{\Sigma}(x \xi)}}
{\displaystyle 1 - (1-x)\frac{1 + (1-\nu)\pmb{\Sigma}(x \xi)}
{1 - \nu \, \pmb{\Sigma}(x \xi)}}
$$
which easily reduces to formula (\ref{InvXI}).
\end{proof}


We now turn to the derivation of identities between the exponential generating functions of any pair of related sequences $S$ and $T$.

\begin{corol}
\textbf{Given sequences $S$ and $T$ related by the inversion formulae 
$S = B(x,\nu) \cdot T \Leftrightarrow T = A(x,\nu) \cdot S$, the exponential generating function $\mathfrak{G}_S^*$ of the sequence $S$ can be expressed by}
\begin{equation}
\mathfrak{G}_S^*(z) = \exp(z) \cdot 
\sum_{k \geqslant 1} (-1)^k T_k \, \frac{z^k}{k!} \, \Phi(k\nu;k;-x \, z), 
\qquad z \in \mathbb{C},
\label{GsExp}
\end{equation}
\textbf{$\Phi(\alpha;\beta;\cdot)$ denotes the Confluent Hypergeometric function with parameters $\alpha$, $\beta \notin -\mathbb{N}$.}
\label{corEGF}
\end{corol}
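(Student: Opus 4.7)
The plan is to compute $\mathfrak{G}_S^*(z) = \sum_{n\geqslant 1} S_n z^n/n!$ directly by inserting the expression $S_n = \sum_{k=1}^n B_{n,k}(x,\nu) T_k$ and exchanging summation order, essentially mimicking part \textbf{a)} of the proof of Corollary \ref{corOGF} but with $z^n/n!$ in place of $z^n$. So first I would write
$$
\mathfrak{G}_S^*(z) = \sum_{k\geqslant 1}(-1)^k T_k \frac{z^k}{k!} \sum_{r\geqslant 0} \frac{z^r}{r!}\, F(-r,k\nu;k;x)
$$
after using $B_{n,k}(x,\nu) = (-1)^k \binom{n}{k}F(k-n,k\nu;k;x)$, the identity $\binom{n}{k}/n! = 1/(k!(n-k)!)$, and the index change $n = k+r$. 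The factor $z^k/k!$ already matches the claimed formula, so everything reduces to identifying the inner double series with $e^z \cdot \Phi(k\nu;k;-xz)$.

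For the inner sum I would expand $F(-r,k\nu;k;x) = \sum_{m=0}^{r} (-r)_m(k\nu)_m x^m /((k)_m m!)$ and use the identity $(-r)_m = (-1)^m r!/(r-m)!$ so that the factorials $r!$ cancel with those from $z^r/r!$. Setting $s = r-m$, the double sum decouples as
$$
\sum_{r\geqslant 0}\frac{z^r}{r!}F(-r,k\nu;k;x) = \left(\sum_{s\geqslant 0}\frac{z^s}{s!}\right)\left(\sum_{m\geqslant 0}\frac{(k\nu)_m}{(k)_m m!}(-xz)^m\right) = e^z\,\Phi(k\nu;k;-xz),
$$
invoking the definition $\Phi(\alpha;\beta;w) = \sum_{m\geqslant 0}(\alpha)_m w^m/((\beta)_m m!)$ (\cite{NIST10}, §13.2.2). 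Substituting this back yields (\ref{GsExp}).

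The only delicate step is the absolute convergence required to justify both the interchange of the summations over $n$ and $k$ (equivalently over $k$ and $r$) and the decoupling via $s = r-m$. This is routine because for any fixed $z\in\mathbb{C}$ the series $\sum_{s\geqslant 0} z^s/s!$ and $\sum_{m\geqslant 0} (k\nu)_m(-xz)^m/((k)_m m!)$ are entire in $z$ (the latter by the standard estimates on Pochhammer ratios), while the outer series $\sum_{k\geqslant 1}(-1)^k T_k z^k \Phi(k\nu;k;-xz)/k!$ is understood in the sense of formal power series in $z$, which matches the formal-series viewpoint already adopted for $\mathfrak{G}_S^*$. No separate convergence claim on $\mathfrak{G}_S^*$ is needed: the identity (\ref{GsExp}) is an identity of formal power series in $z$, valid coefficient-by-coefficient, and promotes to a genuine analytic identity on any disk where $\mathfrak{G}_S^*$ converges.
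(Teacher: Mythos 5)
Your proposal is correct and follows essentially the same route as the paper: substitute $B_{n,k}(x,\nu)$ into $\mathfrak{G}_S^*$, exchange summations, use $(-r)_m=(-1)^m r!/(r-m)!$ to decouple the exponential series from the series in $m$, and identify the latter with $\Phi(k\nu;k;-xz)$. The only cosmetic difference is that you package the inner sums as $\sum_{r\geqslant 0} z^r F(-r,k\nu;k;x)/r!$ before decoupling, whereas the paper sums $\sum_{r\geqslant m} z^r/(r-m)! = z^m e^z$ directly; the computation is the same.
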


\begin{proof}
A calculation similar to that of Corollary \ref{corOGF} gives
\begin{align}
\mathfrak{G}_S^*(z) = & \, \sum_{n \geqslant 0} \frac{z^n}{n!} 
\left ( \sum_{k=1}^n B_{n,k}(x,\nu) T_k \right )
\nonumber \\
= & \, \sum_{k \geqslant 1} (-1)^k T_k \frac{z^k}{k} 
\sum_{m \geqslant 0} (-1)^m \, \frac{\Gamma(m+k\nu) \, x^m}{\Gamma(k\nu)m!} \frac{1}{(m+k-1)!}
\left ( \sum_{r = m}^{+\infty} \frac{z^r}{(r-m)!} \right );
\nonumber
\end{align}
as $\sum_{r \geqslant m} z^r/(r-m)! = z^m \exp(z)$, the latter reduces to
$$
\mathfrak{G}_S^*(z) = \exp(z) \sum_{k \geqslant 1} (-1)^k T_k \frac{z^k}{k} 
\sum_{m \geqslant 0} (-xz)^m \, \frac{(k\nu)_m}{m!} 
\frac{1}{(k-1)!(k)_m}
$$
which, from the expansion of $\Phi(k\nu;k;-xz)$ in powers of $-xz$, yields  
(\ref{GsExp}).
\end{proof}

\noindent
Expression (\ref{GsExp}), however, does not generally relate to the exponential generating function $\mathfrak{G}_T^*$ of the sequence $T$. We have neither been able to obtain any remarkable identity for the exponential generating function $\mathfrak{G}_T^*$ in terms of $\mathfrak{G}_S^*$. 



\section{Applications}
\label{SecA}


We first apply (Section \ref{SecA1}) the inversion formula of Theorem 
\ref{PropIn} and the associated relations between generating functions 
(Corollaries \ref{corOGF} and \ref{corEGF}) to the resolution of the infinite linear system (\ref{T0}) motivated in the Introduction. Specific extensions of the inversion formula to other families of special polynomials are finally stated (Section \ref{SecA2}).

\subsection{Resolution of infinite system (\ref{T0})}
\label{SecA1}
The resolution of integral equation (\ref{EI0}) has been reduced to that of infinite triangular system (\ref{T0}), whose solution can now be expressed as follows.

\begin{corol}
\textbf{The unique solution $(E_b)_{b \geqslant 1}$ to system (\ref{T0}) is given by}
\begin{equation}
E_b = \frac{1-x}{(U^-)^{b+1}} \sum_{\ell = 1}^b (-1)^{\ell - 1} 
\binom{b}{\ell} F(\ell-b,\ell \nu;\ell;x) \, x^{\ell - 1} 
\frac{\Gamma(\ell - \ell \nu)}{\Gamma(\ell)\Gamma(1-\ell \nu)} \, K_\ell
\label{E0}
\end{equation}
\textbf{for all $b \geqslant 1$.}
\label{C2}
\end{corol}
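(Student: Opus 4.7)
The proof is essentially a bookkeeping exercise that reduces to a direct application of Theorem \ref{PropIn} to the reformulation (\ref{T0BIS}) already carried out in the Introduction. The plan is as follows.

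First, I would recall that system (\ref{T0}) was equivalently rewritten in (\ref{T0BIS}) as
$$
\sum_{\ell=1}^{b} A_{b,\ell}(x,\nu) \, \widetilde{E}_\ell = \widetilde{K}_b, \qquad b \in \mathbb{N}^*,
$$
with the explicit substitutions
$\widetilde{E}_\ell = (U^-)^{\ell+1} E_\ell$ and
$\widetilde{K}_b = -\frac{\Gamma(b-b\nu)}{\Gamma(b)\Gamma(1-b\nu)} (1-x) x^{b-1} K_b$,
and the coefficient matrix $A(x,\nu)$ exactly as in (\ref{DefABxNU}).

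Next, Theorem \ref{PropIn} applies verbatim to the pair of sequences $(\widetilde{E}_n)_{n \geqslant 1}$ and $(\widetilde{K}_n)_{n \geqslant 1}$, yielding the explicit inverse relation
$$
\widetilde{E}_n = \sum_{k=1}^{n} B_{n,k}(x,\nu) \, \widetilde{K}_k
= \sum_{k=1}^{n} (-1)^k \binom{n}{k} F(k-n,k\nu;k;x) \, \widetilde{K}_k
$$
for every $n \in \mathbb{N}^*$. It remains only to substitute the expressions of $\widetilde{E}_n$ and $\widetilde{K}_k$ back into this identity. Dividing by $(U^-)^{n+1}$ and pulling the sign $(-1)^k$ together with the minus sign coming from $\widetilde{K}_k$ into a single factor $(-1)^{k-1}$, then renaming $n \to b$ and $k \to \ell$, produces the claimed formula (\ref{E0}).

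There is no real obstacle: uniqueness follows because $A(x,\nu)$ is lower triangular with non-vanishing diagonal entries $(-1)^n$, as already observed after (\ref{DefAB}), and existence together with the closed form of the solution are both immediate consequences of Theorem \ref{PropIn}. The only care required is algebraic: tracking the signs and the Gamma-function ratios introduced by the renormalization $K_b \mapsto \widetilde{K}_b$, so that the final expression matches (\ref{E0}) precisely.
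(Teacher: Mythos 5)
Your proposal is correct and follows essentially the same route as the paper: rewrite system (\ref{T0}) in the reduced form (\ref{T0BIS}) via the substitutions $\widetilde{E}_\ell = (U^-)^{\ell+1}E_\ell$ and $\widetilde{K}_b = -\tfrac{\Gamma(b-b\nu)}{\Gamma(b)\Gamma(1-b\nu)}(1-x)x^{b-1}K_b$, apply the inversion formula of Theorem \ref{PropIn} to obtain $\widetilde{E} = B(x,\nu)\cdot\widetilde{K}$, and back-substitute to reach (\ref{E0}). Your sign bookkeeping (merging the $(-1)^k$ from $B_{n,k}$ with the minus sign in $\widetilde{K}_k$ into $(-1)^{k-1}$) and the uniqueness remark via the non-vanishing diagonal entries match the paper's argument.
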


\begin{proof}
By expression (\ref{Q0}) for the coefficients of lower-triangular matrix $Q$, equation (\ref{T0}) equivalently reads
\begin{equation}
\sum_{\ell=1}^b (-1)^\ell \binom{b}{\ell} F(\ell-b,-b\nu;-b;x) \cdot 
\widetilde{E}_\ell = \widetilde{K}_b, \qquad 1 \leqslant \ell \leqslant b,
\label{S0}
\end{equation}
when setting
\begin{equation}
\left\{
\begin{array}{ll}
\widetilde{E}_\ell = (U^-)^{\ell + 1} E_\ell, 
\qquad \qquad \qquad \qquad \qquad \quad \; \ell \geqslant 1,
\\ \\
\widetilde{K}_b = \displaystyle - \frac{\Gamma(b - b \nu)}
{\Gamma(b)\Gamma(1-b \nu)} 
(1-x)x^{b-1} \cdot K_b, \qquad b \geqslant 1.
\end{array} \right.
\label{S1}
\end{equation}
The application of inversion Theorem \ref{PropIn} to lower-triangular system 
(\ref{S0}) readily provides the solution sequence 
$(\widetilde{E}_\ell)_{\ell \in \mathbb{N}}$ in terms 
of the sequence $(\widetilde{K}_b)_{b \in \mathbb{N}^*}$; using then  transformation (\ref{S1}), the final solution (\ref{E0}) for the sequence 
$(E_\ell)_{\ell \in \mathbb{N}^*}$ follows. 
\end{proof}

The coefficients $K_b$, $b \geqslant 1$, of the right-hand side of system 
(\ref{T0}) can be actually represented by the integral \cite{GQSN18}
\begin{align}
K_b = \int_0^{U^-} \left [ (b-1)(1-\zeta)^b + 1 \right ] \, 
\mathfrak{R}(\zeta)^b \, \frac{\mathrm{d}\zeta}{(1-\zeta)^2}, 
\qquad \quad b \geqslant 1,
\label{CoeffK0}
\end{align}
where $\mathfrak{R}(\cdot)$ is the given function defined by
$$
\mathfrak{R}(\zeta) = \left ( 1 - \frac{\zeta}{U^-} \right )^{-\nu}
\left ( 1 - \frac{\zeta}{U^+} \right )^{\nu - 1}, 
\qquad \zeta \notin [U^-,U^+],
$$
for some real parameters $0 < U^- < U^+$ and $\nu < 0$. From the integral representation (\ref{CoeffK0}) of coefficients $K_b$, $b \geqslant 1$, and as an application of Corollary \ref{corOGF}, the generating function 
$\mathfrak{G}_E$ of the solution $(E_b)_{b \geqslant 1}$ to system (\ref{T0}) can now be given the following integral representation. 

\begin{corol}
\textbf{The generating function $\mathfrak{G}_E$ of the solution 
$(E_b)_{b \geqslant 1}$ to system (\ref{T0}) is given by}
\begin{equation}
\mathfrak{G}_E(z) = \frac{x-1}{x} \left [ \frac{1-\nu}{U^- - z} + 
\frac{\nu}{U^- - (1-x)z} \right ] 
\int_0^{U^-} G \left (\zeta; \frac{z}{U^-} \right ) \, 
\frac{\mathrm{d}\zeta}{(1-\zeta)^2} 
\label{GenE}
\end{equation}
\textbf{with kernel $G$ defined by}
\begin{align}
G(\zeta,z) = & \, 
\pmb{\Sigma} \left[ g \left ( \zeta; z \right ) \right ] \; + 
\nonumber \\
& \, \pmb{\Sigma}^2 
\left[ (1-\zeta) \, g \left( \zeta; z \right ) \right ] 
\left ( 1 - 2\nu - \nu(1-\nu)\pmb{\Sigma}\left[ (1-\zeta) \, 
g \left( \zeta; z \right ) \right ] \right )
\nonumber
\end{align}
\textbf{for small enough $z$, setting 
$g(\zeta;z) = x \, \mathfrak{R}(\zeta) \, \Xi(z)$ and with function 
$\pmb{\Sigma}$ given in Lemma \ref{lemmU}.a).}
\label{C3}
\end{corol}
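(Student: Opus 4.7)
The plan is to combine the integral representation (\ref{CoeffK0}) of the coefficients $K_b$ with the functional identity of Corollary \ref{corOGF} applied to the reduced sequences $(\widetilde{E}_\ell)$ and $(\widetilde{K}_b)$ introduced in (\ref{S1}). By (\ref{S0}) these satisfy $\widetilde{K} = A(x,\nu) \cdot \widetilde{E}$, so Corollary \ref{corOGF} immediately yields
\[
\mathfrak{G}_{\widetilde{E}}(w) = \left[ \frac{1-\nu}{1-w} + \frac{\nu}{1-(1-x)w} \right] \mathfrak{G}_{\widetilde{K}}(\Xi(w)).
\]
The rescaling $\widetilde{E}_\ell = (U^-)^{\ell+1} E_\ell$ gives $\mathfrak{G}_{\widetilde{E}}(w) = U^- \mathfrak{G}_E(U^- w)$; specialising to $w = z/U^-$ reproduces exactly the prefactor $(1-\nu)/(U^- - z) + \nu/(U^- - (1-x)z)$ of (\ref{GenE}), the constant $(x-1)/x$ being supplied by the normalisation of $\widetilde{K}_b$.

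The next step is to express $\mathfrak{G}_{\widetilde{K}}(w)$ as an integral over $[0,U^-]$. Substituting (\ref{CoeffK0}) in the definition of $\mathfrak{G}_{\widetilde{K}}$ and exchanging summation with integration (legitimate for $|w|$ small enough thanks to the radius estimate of Lemma \ref{lemmU}.a)), the integrand splits into two power series in $w$: a term $\mathcal{S}_1(u) = \sum_{b\geqslant 1} c_b \, u^b$ at $u = x w \, \mathfrak{R}(\zeta)$ coming from the ``$1$'' in the bracket $(b-1)(1-\zeta)^b + 1$, and a term $\mathcal{S}_2(v) = \sum_{b\geqslant 1} (b-1) c_b \, v^b$ at $v = (1-\zeta) \, u$ coming from the other summand, where $c_b = \Gamma(b(1-\nu))/[\Gamma(b)\Gamma(1-b\nu)]$. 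By the very definition of $\pmb{\Sigma}$ one has $\mathcal{S}_1 = \pmb{\Sigma}$, while $\mathcal{S}_2(v) = v\,\pmb{\Sigma}'(v) - \pmb{\Sigma}(v)$; eliminating $v\pmb{\Sigma}'(v)$ via the differential equation (\ref{EDiff0}) then collapses $\mathcal{S}_2$ to
\[
\mathcal{S}_2(v) = \pmb{\Sigma}(v)^2 \left[ (1-2\nu) - \nu(1-\nu)\pmb{\Sigma}(v) \right].
\]

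Setting finally $w = \Xi(z/U^-)$ so that $x w \, \mathfrak{R}(\zeta) = g(\zeta; z/U^-)$ in the notation of the statement, the bracket $\mathcal{S}_1(u) + \mathcal{S}_2(v)$ coincides exactly with $G(\zeta, z/U^-)$, and assembling the prefactors yields (\ref{GenE}). The step I expect to be the main obstacle is the reduction of $\mathcal{S}_2$: the combination $(b-1)(1-\zeta)^b + 1$ appearing in (\ref{CoeffK0}) is precisely tailored to match the operator $1 + w\,(\mathrm{d}/\mathrm{d}w)$ acting on $\pmb{\Sigma}$, so that without the closed-form consequence of the ODE recorded in Lemma \ref{lemmU}.b) the kernel $G$ would not be available in the elementary cubic form in $\pmb{\Sigma}$ displayed in the statement. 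Convergence for $z$ close enough to the origin follows from the radius estimate on $\pmb{\Sigma}$ together with the analyticity of $\Xi$ at $0$ established in Corollary \ref{corOGFbis}.
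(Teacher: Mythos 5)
Your proposal is correct and follows essentially the same route as the paper: the same splitting of the bracket $(b-1)(1-\zeta)^b+1$ into a $\pmb{\Sigma}$ term and a $v\,\pmb{\Sigma}'(v)-\pmb{\Sigma}(v)$ term, the same elimination of $\pmb{\Sigma}'$ via the differential equation (\ref{EDiff0}), the same use of relation (\ref{eq:OGFRelation}) for the reduced pair $(\widetilde{E},\widetilde{K})$, and the same rescaling (\ref{S1}) producing the prefactors; the only difference is the (immaterial) order in which the functional identity and the integral computation of $\mathfrak{G}_{\widetilde{K}}$ are invoked. No gaps worth noting beyond the cosmetic point that the analyticity of $\Xi$ at the origin is established in the proof of Corollary \ref{corOGF} rather than in Corollary \ref{corOGFbis}.
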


\begin{proof}
We first calculate the generating function $\mathfrak{G}_{\widetilde{K}}$ of the reduced sequence $(\widetilde{K}_\ell)_{\ell \geqslant 1}$ introduced in 
(\ref{S1}). Using the representation (\ref{CoeffK0}) of $K_\ell$, 
$\ell \geqslant 1$, we have
\begin{align}
& \mathfrak{G}_{\widetilde{K}}(z) = \sum_{\ell \geqslant 1} 
\widetilde{K}_\ell z^\ell \; = 
\nonumber \\
& - \sum_{\ell \geqslant 1} \frac{\Gamma(\ell - \ell \nu)}
{\Gamma(\ell)\Gamma(1-\ell \nu)} (1-x) x^{\ell-1} z^\ell \; 
\int_0^{U^-} \left [ \ell(1-\zeta)^\ell + 1 - (1-\zeta)^\ell \right ] 
\mathfrak{R}(\zeta)^\ell \, \frac{\mathrm{d}\zeta}{(1-\zeta)^2}
\nonumber
\end{align}
that is,
\begin{align}
\mathfrak{G}_{\widetilde{K}}(z) = \frac{x-1}{x} 
\int_0^{U^-} & \frac{\mathrm{d}\zeta}{(1-\zeta)^2} \; 
\Bigl [\sum_{\ell \geqslant 1} \frac{\Gamma(\ell - \ell \nu)} 
{\Gamma(\ell)\Gamma(1-\ell \nu)} 
\nonumber \\
& \left \{ \ell[(1-\zeta)x z]^\ell + (x z)^\ell - 
[(1-\zeta)xz] ^\ell \right \} \, \mathfrak{R}(\zeta)^\ell \Bigr ].
\nonumber
\end{align}
Assume that, for given $x$ and all $\zeta \in [0,U^-]$. $z$ is small enough so that the arguments $\mathfrak{R}(\zeta)(1-\zeta) x z$ and 
$\mathfrak{R}(\zeta) x z$ in the latter integrand together pertain to the open disk centered at the origin and with radius $R(\nu)$, as given in Lemma 
\ref{lemmU}. The series $\pmb{\Sigma}$ introduced in Lemma \ref{lemmU} then enables us to obtain
\begin{align}
\mathfrak{G}_{\widetilde{K}}(z) = \frac{x-1}{x} 
\int_0^{U^-} \frac{\mathrm{d}\zeta}{(1-\zeta)^2} \; 
\Bigl \{ & \; 
\mathfrak{R}(\zeta)(1-\zeta)x z \; 
\pmb{\Sigma}'(\mathfrak{R}(\zeta)(1-\zeta)x z) 
\; + 
\nonumber \\
& \; \pmb{\Sigma}(\mathfrak{R}(\zeta)x z) - 
\pmb{\Sigma}(\mathfrak{R}(\zeta)(1-\zeta)x z) \Bigr \}
\label{GenEbis}
\end{align}
for small enough $z$, where $\pmb{\Sigma}'$ denotes the first derivative of function $\pmb{\Sigma}$; using then the differential equation (\ref{EDiff0}) for the difference $w \, \pmb{\Sigma}'(w) - \pmb{\Sigma}(w)$ applied to the argument $w = \mathfrak{R}(\zeta) (1-\zeta)x z$, formula (\ref{GenEbis})  equivalently reads
\begin{align}
\mathfrak{G}_{\widetilde{K}}(z) = \frac{x-1}{x} 
\int_0^{U^-} & \frac{\mathrm{d}\zeta}{(1-\zeta)^2} \; 
\Bigl \{ \pmb{\Sigma}(\mathfrak{R}(\zeta)x z) \; + 
\nonumber \\ 
& \pmb{\Sigma}^2(\mathfrak{R}(\zeta)(1-\zeta)x z) \cdot 
\left ( 1 - 2\nu - \nu(1-\nu)\pmb{\Sigma}(\mathfrak{R}(\zeta)(1-\zeta)x z) \right ) \Bigr \}
\label{GenEter}
\end{align}
in terms of function $\pmb{\Sigma}$ only. Now, by relation 
(\ref{eq:OGFRelation}) of Corollary \ref{corOGF}, the generating function 
$\mathfrak{G}_{\widetilde{E}}$ of the reduced sequence 
$(\widetilde{E}_\ell)_{\ell \geqslant 1}$ and $\mathfrak{G}_{\widetilde{K}}$ are related by
$$
\mathfrak{G}_{\widetilde{E}}(z) = 
\left [ \frac{1-\nu}{1-z} + \frac{\nu}{1-z(1-x)} \right ] 
\mathfrak{G}_{\widetilde{K}}(\Xi(z)), 
\qquad \vert z \vert < \min \left ( 1, \frac{1}{\vert 1 - x\vert} \right );
$$
using (\ref{GenEter}) in the latter expression and noting from relation 
(\ref{S1}) between sequences $E$ and $\widetilde{E}$ that 
$$
\mathfrak{G}_E(z) = \frac{1}{U^-} \cdot \mathfrak{G}_{\widetilde{E}} 
\left ( \frac{z}{U^-} \right )
$$
for small enough $z$ eventually yields (\ref{GenE}), as claimed. 
\end{proof}

As an application of Corollary \ref{corEGF}, we now derive the exponential generating function of the solution $(E_b)_{b \geqslant 1}$. Note that the notation $\mathfrak{G}^*_E(z)$ for the generating function of this sequence 
$(E_b)_{b \geqslant 1}$ used below is equivalent to the notation $E^*(z)$ introduced in (\ref{defE*}) for the entire solution to integral equation 
(\ref{EI0}).

\begin{corol}
\textbf{For $x \neq 0$ and $0 < \mathrm{Re}(\nu) < 1$, the exponential generating function $\mathfrak{G}_E^*$ of the solution 
$(E_b)_{b \geqslant 1}$ to system (\ref{T0}) can be given the double integral representation}
\begin{equation}
\mathfrak{G}_E^*(z) = \frac{1-x}{\pi \, x \, U^-} \, 
\exp \left ( \frac{z}{U^-} \right ) 
\int_0^{U^-} \frac{\mathrm{d}\zeta}{(1-\zeta)^2} 
\int_0^1 H_\nu \left( \zeta,t; \frac{z}{U^-} \right ) \frac{\mathrm{d}t}{1-t}
\label{GEK0}
\end{equation}
\textbf{with kernel $H$ defined by}
\begin{align}
H_\nu(\zeta,t;z) = & \, 
e^{-\cos(\nu\pi)h_\nu(\zeta,t) \, z}
\sin \Bigl [ \sin(\nu\pi) \, h_\nu(\zeta,t) \, z \Bigr ] \; - 
\nonumber \\
& \, e^{-\cos(\nu\pi)(1-\zeta)h_\nu(\zeta,t) \, z}
\cdot 
\Bigl \{ \sin \Bigl [ \sin(\nu\pi) \, (1-\zeta) \, h_\nu(\zeta,t) \, z \Bigr ] 
\; +
\nonumber \\
& \, (1-\zeta) \, h_\nu(\zeta,t) \, z \cdot 
\sin \Bigl [ \nu \pi - \sin(\nu\pi) \, (1-\zeta) \, h_\nu(\zeta,t) \, z \Bigr ] \Bigr \}
\nonumber
\end{align}
\textbf{for all $z \in \mathbb{C}$, where we set 
$h_\nu(\zeta,t) = x \, \mathfrak{R}(\zeta) \cdot t^\nu(1-t)^{1-\nu}$.}
\label{C4}
\end{corol}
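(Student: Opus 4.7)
The plan is to reduce matters to Corollary~\ref{corEGF} applied to the transformed pair $(S,T) = (\widetilde{E},\widetilde{K})$ from (\ref{S1}), and then to evaluate the resulting power series in closed form via integral representations. Since the system (\ref{S0}) is exactly the inversion scheme (\ref{eq:inversionR}) with $S = \widetilde{E}$ and $T = \widetilde{K}$, Corollary~\ref{corEGF} gives immediately
$$
\mathfrak{G}^*_{\widetilde{E}}(z) \;=\; \exp(z)\sum_{k \geqslant 1} (-1)^k\,\widetilde{K}_k\,\frac{z^k}{k!}\,\Phi(k\nu;k;-xz),
$$
and the scaling $\widetilde{E}_\ell = (U^-)^{\ell+1}\,E_\ell$ yields $\mathfrak{G}^*_E(z) = (U^-)^{-1}\mathfrak{G}^*_{\widetilde{E}}(z/U^-)$. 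The remaining task is to cast this series as the double integral (\ref{GEK0}).

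For this step, under the assumption $0 < \mathrm{Re}(\nu) < 1$ I would substitute the Euler-type integral representation of $\Phi(k\nu;k;-xw)$ (valid because $0 < \mathrm{Re}(k\nu) < k$ for every $k \geqslant 1$) together with the integral representation (\ref{CoeffK0}) of $K_k$ inside $\widetilde{K}_k$. Exchanging summation and integration is legitimate for $|z|$ small by absolute convergence, and extends to all $z \in \mathbb{C}$ by analytic continuation since both sides are entire in $z$. The Gamma quotients collapse via the reflection identity $\Gamma(k\nu)\Gamma(1-k\nu) = \pi/\sin(k\nu\pi)$, leaving a residual series of the form
$$
\sum_{k \geqslant 1}(-1)^k \sin(k\nu\pi)\,\frac{W^k}{k!}\,\bigl[(k-1)(1-\zeta)^k+1\bigr], \qquad W := h_\nu(\zeta,t)\,z/U^-.
$$

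To close this sum I would decompose $(k-1)(1-\zeta)^k + 1 = 1 - (1-\zeta)^k + k(1-\zeta)^k$, yielding three elementary summations. The two without the factor $k$ each reduce, via $\sin(k\nu\pi) = \mathrm{Im}\,e^{\mathrm{i}k\nu\pi}$ and the exponential series, to the Euler identity
$$
\sum_{k \geqslant 1}(-1)^k\sin(k\nu\pi)\,\frac{W^k}{k!} = -e^{-\cos(\nu\pi)W}\sin(\sin(\nu\pi)W),
$$
evaluated at $W = h_\nu z/U^-$ and $W = (1-\zeta)h_\nu z/U^-$; the third, carrying the extra factor $k$, is $W\,\frac{\mathrm{d}}{\mathrm{d}W}$ of this same identity, and the angle-subtraction formula $\cos(\nu\pi)\sin\theta - \sin(\nu\pi)\cos\theta = -\sin(\nu\pi - \theta)$ rewrites the result as the $\sin(\nu\pi - \sin(\nu\pi)W)$ contribution in $H_\nu$. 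Assembling the three pieces at these two arguments and absorbing the constant prefactor then produces (\ref{GEK0}).

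The main obstacle will be bookkeeping: all the sign flips originating from $(-1)^k$, the reflection identity, Euler's formula, and the angle-subtraction identity must be tracked so that the mixed trigonometric-exponential structure of $H_\nu$ emerges in precisely its stated form, and the residual exponential $e^{-xzt/U^-}$ generated by the integral representation of $\Phi$ must be correctly absorbed into the kernel (or merged with the outer $\exp(z/U^-)$). Once the closed form is secured, verifying that the resulting integrand defines an entire function of~$z$ and that the outer $\zeta$-integral converges is routine from the explicit trigonometric-exponential form of $H_\nu$ and the integrability of $\mathfrak{R}(\zeta)/(1-\zeta)^2$ on $[0,U^-]$.
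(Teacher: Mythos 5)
Your proposal follows essentially the same route as the paper's proof: apply Corollary \ref{corEGF} to $(S,T)=(\widetilde{E},\widetilde{K})$, insert the Euler integral representation of $\Phi(k\nu;k;-xz)$ (valid since $0<\mathrm{Re}(\nu)<1$) together with the representation (\ref{CoeffK0}) of $K_k$, collapse the Gamma factors via the reflection formula, and close the resulting series using the elementary identity for $\sum_k(-1)^{k-1}\sin(k\nu\pi)W^k/k!$ and its $W\,\mathrm{d}/\mathrm{d}W$ derivative, before rescaling by $U^-$. The one delicate point, the residual factor $e^{-xzt}$ produced by the integral representation of $\Phi$, is something you correctly flag as needing careful bookkeeping (the paper's displayed kernel $H_\nu$ does not exhibit it), so your attention to it is warranted rather than a gap.
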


\begin{proof}
Using the integral representation of the Confluent Hypergeometric function 
(\cite{NIST10}, Chap.13, 13.4.1), write
$$
\Phi(b\nu;b;-xz) = \frac{\Gamma(b)}{\Gamma(b\nu)\Gamma(b(1-\nu))} 
\int_0^1 e^{-xz \, t} t^{b\nu-1}(1-t)^{b(1-\nu)-1} \, \mathrm{d}t, 
\qquad z \in \mathbb{C},
$$
for all $b \in \mathbb{N}^*$ and with $0 < \mathrm{Re}(\nu) < 1$; applying then relation (\ref{GsExp}) between sequences $S = \widetilde{E}$ and 
$T = \widetilde{K}$, on account of formula (\ref{S1}) for $\widetilde{K}$ in terms of sequence $K$, we obtain
\begin{align}
\mathfrak{G}^*_{\widetilde{E}}(z) = \exp(z) \sum_{b \geqslant 1} 
(-1)^b \frac{z^b}{b!} (-1) & \frac{\Gamma(b(1-\nu))}{\Gamma(b)\Gamma(1-b\nu)}
(1-x)x^{b-1} \cdot K_b \; \times 
\nonumber \\
& \, \frac{\Gamma(b)}{\Gamma(b\nu)\Gamma(b(1-\nu))} 
\int_0^1 e^{-xz \, t} t^{b\nu-1}(1-t)^{b(1-\nu)-1} \, \mathrm{d}t
\label{GEK0bis}
\end{align}
which, after the reflection formula $\Gamma(a)\Gamma(1-a) = 
\pi/\sin(\pi a)$ (\cite{NIST10}, §5.5.3) applied to the argument $a = b\nu$, reads
$$
\mathfrak{G}^*_{\widetilde{E}}(z) = (1-x) e^z \sum_{b \geqslant 1} 
(-1)^{b-1} \frac{z^b}{b!} \, \frac{\sin(b\nu\pi)}{\pi} \, x^{b-1} K_b 
\int_0^1 e^{-xz \, t} t^{b\nu-1}(1-t)^{b(1-\nu)-1} \, \mathrm{d}t.
$$
Now, using the integral representation of the sequence 
$K = (K_b)_{b \geqslant 1}$ given in (\ref{CoeffK0}) and inverting the integration (in both variables $\zeta$ and $t$) and series summation orders, the latter identity for $\mathfrak{G}^*_{\widetilde{E}}(z)$ yields
\begin{align}
\mathfrak{G}^*_{\widetilde{E}}(z) = & \, (1-x) \exp(z) \int_0^{U^-} 
\frac{\mathrm{d}\zeta}{(1-\zeta)^2} \int_0^1 \frac{\mathrm{d}t}{1-t} \; \times
\nonumber \\
& \, \sum_{b \geqslant 1} (-1)^{b-1}\frac{z^b}{b!} 
\frac{\sin(b\nu\pi)}{\pi} \, x^{b-1} \left [ (b-1)(1-\zeta)^b + 1 \right ]
\left [ \mathfrak{R}(\zeta)\, t^\nu (1-t)^{1-\nu} \right ] ^b.
\label{GEK1}
\end{align}
Writing $\sin(b\nu\pi) = (e^{ib\nu\pi} - e^{-ib\nu\pi})/2i$, $i^2 = -1$, we easily obtain the formulas
$$
\left\{
\begin{array}{ll}
\displaystyle \sum_{b \geqslant 1} (-1)^{b-1} \sin(b\nu\pi) \frac{W^b}{b!} = 
e^{-\cos(\nu\pi)W}\sin(\sin(\nu\pi)W), 
\\ \\
\displaystyle 
\sum_{b \geqslant 1} (-1)^{b-1} b \sin(b\nu\pi) \frac{W^b}{b!} = W 
e^{-\cos(\nu\pi)U}\sin(\nu\pi - \sin(\nu\pi)W)
\end{array} \right.
$$
(the latter following by differentiation of the former with respect to variable $W$); applying these formulas to the summation of the series in expression (\ref{GEK1}) (when successively setting 
$W = x \, \mathfrak{R}(\zeta) \, t^\nu (1-t)^{1-\nu}z$ and 
$W = x (1-\zeta) \mathfrak{R}(\zeta) \, t^\nu (1-t)^{1-\nu}z$) then gives 
\begin{equation}
\mathfrak{G}^*_{\widetilde{E}}(z) = \frac{1-x}{\pi \, x} \, \exp(z) 
\int_0^{U^-} \frac{\mathrm{d}\zeta}{(1-\zeta)^2} 
\int_0^1 \frac{\mathrm{d}t}{1-t} \, H_\nu(\zeta,t;z) , \quad z \in \mathbb{C},
\label{GEK2}
\end{equation}
with $H_\nu(\zeta,t;z)$ given as in the Corollary. Noting from relation 
(\ref{S1}) between sequences $E$ and $\widetilde{E}$ that 
$$
\mathfrak{G}_E^*(z) = 
\frac{1}{U^-} \cdot 
\mathfrak{G}_{\widetilde{E}}^* \left ( \frac{z}{U^-} \right ), 
\quad z \in \mathbb{C},
$$
eventually yields the final representation (\ref{GEK0}), as claimed.
\end{proof}

The integral representation of $\mathfrak{G}_E^*$ obtained in Corollary 
\ref{C4} in the case when $0 < \mathrm{Re}(\nu) < 1$ can be extended to a larger domain of values of parameter $\nu$, provided that the integral w.r.t. variable $t$ is replaced by a contour integral in the complex plane. In this manner, we can assert the following.

\begin{corol}
\textbf{For $x \neq 0$ and $\mathrm{Re}(\nu) < 1$, the exponential generating function $\mathfrak{G}_E^*$ of the solution $(E_b)_{b \geqslant 1}$ to 
(\ref{T0}) can be given the double integral representation}
\begin{equation}
\mathfrak{G}^*_{E}(z) = \frac{1-x}{2i\pi \, x \, U^-} \, e^{(1-x)z} 
\int_0^{U^-} \frac{\mathrm{d}\zeta}{(1-\zeta)^2} 
\int_0^{(1)^+} J_\nu \left ( \zeta,t; \frac{z}{U^-}\right ) 
\frac{\mathrm{d}t}{t(t-1)} 
\label{GEK2bis}
\end{equation}
\textbf{with kernel $J_\nu$ defined by}
$$
J_\nu(\zeta,t;z) = 
\left [ 1 + (1-\zeta) j_\nu(\zeta,t) z \right ] 
e^{-(1-\zeta) j_\nu(\zeta,t) \, z} -  e^{-j_\nu(\zeta,t)z}
$$
\textbf{for all $z \in \mathbb{C}$, where we set 
$j_\nu(\zeta,t) = x \, \mathfrak{R}(\zeta) \cdot t^{1-\nu}(t-1)^\nu$.}

\textbf{(The contour in integral (\ref{GEK2bis}) in variable $t$ is a loop starting and ending at point $t = 0$, and encircling point $t = 1$ once in the positive sense).}
\label{C5}
\end{corol}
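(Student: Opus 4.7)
The plan is to mirror the derivation of Corollary \ref{C4} but to replace the real Euler integral representation of the Confluent Hypergeometric function by its Pochhammer-loop analogue, which is valid on the larger strip $\mathrm{Re}(\nu)<1$. Starting again from Corollary \ref{corEGF} applied to $S=\widetilde E$, $T=\widetilde K$, I would write
$$\mathfrak G^*_{\widetilde E}(z) = e^z\sum_{k\ge 1}(-1)^k\widetilde K_k\,\frac{z^k}{k!}\,\Phi(k\nu;k;-xz),$$
and immediately invoke Kummer's transformation $\Phi(k\nu;k;-xz)=e^{-xz}\Phi(k(1-\nu);k;xz)$. This step manufactures the prefactor $e^{(1-x)z}$ appearing in (\ref{GEK2bis}) and, more importantly, shifts the first parameter of $\Phi$ from $k\nu$ to $k(1-\nu)$, whose real part is positive precisely under the hypothesis $\mathrm{Re}(\nu)<1$ and so allows an Euler-type representation to be written.

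I would then use the Hankel loop formula for the Confluent Hypergeometric function,
$$\Phi(a;b;w) = \frac{\Gamma(b)\,\Gamma(1+a-b)}{2\mathrm{i}\pi\,\Gamma(a)}\int_0^{(1+)} e^{wt}\,t^{a-1}(t-1)^{b-a-1}\,\mathrm d t,$$
valid for $\mathrm{Re}(a)>0$, which is obtained either by deforming the segment $[0,1]$ into a positively oriented loop around $t=1$ (with the principal branch of $(t-1)^{b-a-1}$, i.e. real positive on $t>1$) or by a confluence limit from the loop representation of ${}_2F_1$ in \cite{NIST10}, \S 15.6.2. Specializing to $a=k(1-\nu)$, $b=k$ so that $1+a-b=1-k\nu$ and $b-a-1=k\nu-1$ yields
$$\Phi(k(1-\nu);k;xz) = \frac{\Gamma(k)\,\Gamma(1-k\nu)}{2\mathrm{i}\pi\,\Gamma(k(1-\nu))}\int_0^{(1+)} e^{xzt}\,t^{k(1-\nu)-1}(t-1)^{k\nu-1}\,\mathrm d t,$$
now valid throughout $\mathrm{Re}(\nu)<1$.

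Substituting this into $\mathfrak G^*_{\widetilde E}(z)$ together with the expression (\ref{S1}) of $\widetilde K_k$, the chain of Gamma-function factors telescopes and leaves the coefficient $(-1)^{k+1}(1-x)x^{k-1}K_k/(2\mathrm{i}\pi)$. Inserting the integral representation (\ref{CoeffK0}) of $K_k$ and swapping the $k$-summation with the two integrations in $\zeta$ and $t$ (justified for $|z|$ small by uniform absolute convergence on the loop, then extended to all $z\in\mathbb{C}$ by analyticity), one groups the $k$th-power factors into the single variable $j_\nu(\zeta,t)=x\mathfrak R(\zeta)\,t^{1-\nu}(t-1)^\nu$. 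The residual $k$-series splits into the two elementary exponential sums
$$\sum_{k\ge 1}(-1)^{k+1}\frac{(j_\nu z)^k}{k!} = 1 - e^{-j_\nu z},\qquad \sum_{k\ge 1}(-1)^{k+1}(k-1)\frac{[(1-\zeta)j_\nu z]^k}{k!} = \bigl(1+(1-\zeta)j_\nu z\bigr)\,e^{-(1-\zeta)j_\nu z} - 1,$$
whose sum is exactly $J_\nu(\zeta,t;z)$. Assembling the outer constants and rescaling through $\mathfrak G_E^*(z)=\mathfrak G^*_{\widetilde E}(z/U^-)/U^-$ then produces (\ref{GEK2bis}).

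The main technical obstacle is the Hankel loop representation of $\Phi$ itself: the branch of $(t-1)^{k\nu-1}$ on the Pochhammer contour must be specified so that, when the loop collapses onto the segment $[0,1]$ in the regime $0<\mathrm{Re}(\nu)<1$, the two sides of the cut combine to produce the factor $1-e^{2\mathrm{i}\pi k\nu}$. This factor, together with the reflection formula $\Gamma(k\nu)\Gamma(1-k\nu)=\pi/\sin(k\nu\pi)$, must recover the Euler representation employed in the proof of Corollary \ref{C4}. This compatibility check both pins down the $1/(2\mathrm{i}\pi)$ normalization and certifies that (\ref{GEK2bis}) is the correct analytic continuation in $\nu$ of (\ref{GEK0}); once the loop formula is established, the remaining steps (swapping orders and summing the two elementary exponential series) are routine.
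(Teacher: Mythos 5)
Your proposal follows essentially the same route as the paper's own proof: Kummer's transformation $\Phi(k\nu;k;-xz)=e^{-xz}\Phi(k(1-\nu);k;xz)$, the loop-contour representation of the Confluent Hypergeometric function (NIST, 13.4.9) applied with $\alpha=k(1-\nu)$, $\beta=k$ under $\mathrm{Re}(\nu)<1$, cancellation of the Gamma factors against those of $\widetilde K_k$ in (\ref{S1}), insertion of the integral representation (\ref{CoeffK0}) of $K_k$, termwise exponential summation yielding $J_\nu$, and the rescaling $\mathfrak{G}_E^*(z)=\mathfrak{G}_{\widetilde E}^*(z/U^-)/U^-$. Your added branch-consistency check that the loop formula collapses to the Euler representation used in Corollary \ref{C4} when $0<\mathrm{Re}(\nu)<1$ is a refinement of the same argument, not a different method.
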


\begin{proof}
Invoke the Kummer transformation (\cite{NIST10}, Chap.13, 13.2.39) to write 
\begin{equation}
\Phi(b\nu;b;-xz) = e^{-xz} \Phi(b(1-\nu);b;xz), \qquad b \in \mathbb{N}^*,
\label{IRK0}
\end{equation}
together with the integral representation of the Confluent Hypergeometric function (\cite{NIST10}, Chap.13, 13.4.9)
\begin{equation}
\Phi(\alpha;\beta;Z) = \frac{1}{2 i \pi}
\frac{\Gamma(1+\alpha-\beta)\Gamma(\beta)}{\Gamma(\alpha)} 
\int_0^{(1)^+} e^{Zt}t^{\alpha - 1}(t-1)^{\beta-\alpha-1} \, \mathrm{d}t
\label{IRK1}
\end{equation}
for $\beta-\alpha \notin -\mathbb{N}$ and $\mathrm{Re}(\alpha) > 0$. On account of (\ref{IRK0}) and (\ref{IRK1}) applied to $\alpha = b(1-\nu)$ for 
$\mathrm{Re}(\nu) < 1$ and $\beta = b \in \mathbb{N}^*$, relation 
(\ref{GsExp}) between sequences $S = \widetilde{E}$ and $T = \widetilde{K}$ now reads
\begin{align}
\mathfrak{G}^*_{\widetilde{E}}(z) = \exp(z) \cdot \sum_{b \geqslant 1} 
(-1)^b & \frac{z^b}{b!} (-1) \frac{\Gamma(b(1-\nu))}{\Gamma(b)\Gamma(1-b\nu)}
(1-x)x^{b-1} \cdot K_b \; \times 
\nonumber \\
& \; \; \; \frac{e^{-xz}}{2i\pi} 
\frac{\Gamma(1-b\nu))\Gamma(b)}{\Gamma(b(1-\nu))} 
\int_0^{(1)^+} e^{xz \, t} t^{b(1-\nu)-1}(1-t)^{b\nu-1} \, \mathrm{d}t
\label{GEK1bis}
\end{align}
for $\mathrm{Re}(\nu) < 1$; all factors depending on the $\Gamma$ function in 
(\ref{GEK1bis}) cancel out and the latter reduces to
$$
\mathfrak{G}^*_{\widetilde{E}}(z) = \frac{(1-x)}{2i\pi \, x} e^{(1-x)z} 
\sum_{b \geqslant 1} 
(-1)^{b-1} \frac{z^b}{b!} \, x^{b-1} K_b 
\int_0^{(1)^+} e^{xz \, t} t^{b(1-\nu)-1}(1-t)^{b\nu-1} \, \mathrm{d}t.
$$
Using the integral representation (\ref{CoeffK0}) of the sequence 
$K = (K_b)_{b \geqslant 1}$ and performing the exponential series summations  then easily yields formula (\ref{GEK2bis}) for $\mathfrak{G}_E^*$.
\end{proof}

\subsection{Consequences of the inversion formulas}
\label{SecA2}
We now show how matrices involving other special polynomials can be recast into our general inversion scheme (\ref{eq:inversionR}). Let 
$L_n^{(\alpha)}(x)$ denote the generalized Laguerre polynomial with order 
$n \in \mathbb{N}$ and parameter $\alpha \in \mathbb{C}$.

\begin{corol}
\textbf{Let $x \in \mathbb{C}$ and define the lower-triangular matrices 
$\widetilde{A}(x)$ and $\widetilde{B}(x)$ by}
	\begin{equation}
	\left\{
	\begin{array}{ll}
	\widetilde{A}_{n,k}(x) = \displaystyle (-1)^n L_{n-k}^{(-n-1)}(-nx),
  \\ \\
	\widetilde{B}_{n,k}(x) = \displaystyle (-1)^k \, \frac{n}{k} \, 
	L_{n-k}^{(k-1)}(kx)
	\end{array} \right.
	\label{DefABx}
	\end{equation}
\textbf{for $1 \leqslant k \leqslant n$. For any pair of complex sequences 
$(S_n)_{n \in \mathbb{N}^*}$ and $(T_n)_{n \in \mathbb{N}^*}$, the inversion formula}
	\begin{equation}
	T_n = \sum_{k=1}^{n}\widetilde{A}_{n,k}(x)S_k 
	\Longleftrightarrow 
	S_n=\sum_{k=1}^{n}\widetilde{B}_{n,k}(x)T_k, \quad n \in \mathbb{N}^*,
	\label{eq:inversionRBIS}
	\end{equation}
\textbf{holds.}
	\label{PropInBIS}
\end{corol}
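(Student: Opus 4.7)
My plan is to deduce this Laguerre inversion from Theorem \ref{PropIn} by a confluent (large-parameter) limit in the parameter $\nu$.

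First I would rewrite the coefficients (\ref{DefABx}) in confluent hypergeometric form. Using the classical identity $L_m^{(\alpha)}(y) = \binom{m+\alpha}{m}\,\Phi(-m;\alpha+1;y)$ together with $\binom{-k-1}{n-k}=(-1)^{n-k}\binom{n}{k}$ (and the similar simplification $\frac{n}{k}\binom{n-1}{k-1}=\binom{n}{k}$), a direct calculation should yield
$$
\widetilde A_{n,k}(x) = (-1)^k\binom{n}{k}\,\Phi(k-n;\,-n;\,-nx),
\qquad
\widetilde B_{n,k}(x) = (-1)^k\binom{n}{k}\,\Phi(k-n;\,k;\,kx),
$$
for $1\leqslant k\leqslant n$; note that $\Phi(k-n;-n;\cdot)$ is a well-defined polynomial since the Pochhammer denominator $(-n)_m$ does not vanish for $m\leqslant n-k<n$, exactly as in Remark \ref{remark:4.1} (mutatis mutandis). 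This is the only step with any bookkeeping to check, and it is the main (albeit modest) obstacle of the proof.

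Next I would recognize these expressions as confluent limits of the Gauss-hypergeometric coefficients of Theorem \ref{PropIn}. Using the termwise limits
$$
\lim_{\nu\to\infty}\frac{(-n\nu)_m}{\nu^m}=(-n)^m,
\qquad
\lim_{\nu\to\infty}\frac{(k\nu)_m}{\nu^m}=k^m,
$$
applied inside the \emph{finite} sums (\ref{DefFpoly}) and (\ref{DefFFbis}), we obtain, for each fixed $n\geqslant k\geqslant 1$,
$$
\widetilde A_{n,k}(x)=\lim_{\nu\to\infty}A_{n,k}\!\Bigl(\tfrac{x}{\nu},\nu\Bigr),
\qquad
\widetilde B_{n,k}(x)=\lim_{\nu\to\infty}B_{n,k}\!\Bigl(\tfrac{x}{\nu},\nu\Bigr).
$$

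Finally I would pass to the limit in the identity $A(x/\nu,\nu)\,B(x/\nu,\nu)=\mathrm{Id}$ provided by Theorem \ref{PropIn}. Because the matrices involved are lower-triangular, the $(n,k)$-entry of the product reduces to the finite sum $\sum_{\ell=k}^{n}A_{n,\ell}(x/\nu,\nu)\,B_{\ell,k}(x/\nu,\nu)$, which is a polynomial in $1/\nu$; the limit $\nu\to\infty$ may therefore be taken termwise and yields $\widetilde A(x)\,\widetilde B(x)=\mathrm{Id}$. The inversion formula (\ref{eq:inversionRBIS}) follows at once. (Alternatively, one could bypass the limit and apply the extended inversion criterion (\ref{Invers0bis}) directly with $a_{m;n}=(-n)^m/(-n)_m$ and $b_{m;k}=k^m/(k)_m$, but the confluent-limit route is shorter since it reuses Theorem \ref{PropIn} as a black box.)
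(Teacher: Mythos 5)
Your proposal is correct and follows essentially the same route as the paper: the confluent limit $\nu\to\infty$ with the scaling $x\mapsto x/\nu$ in Theorem \ref{PropIn}, identifying the limiting coefficients with the Laguerre polynomials through the Kummer (confluent hypergeometric) function and the binomial simplifications $\binom{-k-1}{n-k}=(-1)^{n-k}\binom{n}{k}$ and $\binom{n-1}{n-k}=\frac{k}{n}\binom{n}{k}$. The only difference is cosmetic: you make explicit the termwise passage to the limit in the finite product sums (which the paper leaves implicit in ``consequently follow''), a welcome but minor addition.
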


\begin{proof}
Applying the substitution $x \in \mathbb{C} \mapsto x/\nu$ in definition   
(\ref{DefABxNU}), and using the fact that $(-n\nu)_j \sim (-n)^j \nu^j$ for large $\nu$ and given $j \geqslant 1$, expression (\ref{DefFpoly}) gives 
$$
F \left ( k-n,-n\nu;-n;\frac{x}{\nu} \right ) = \sum_{j = 0}^{n-k}
\frac{(k-n)_j(-n\nu)_j}{(-n)_j}\left ( \frac{x}{\nu} \right )^j \frac{1}{j!} 
\longrightarrow \sum_{j = 0}^{n-k} 
\frac{(k-n)_j(-n)^j}{(-n)_j} \, \frac{x^j}{j!}
$$
when $\nu \rightarrow \infty$. This limit statement equivalently reads
$$
\lim_{\nu \rightarrow \infty} F \left ( k-n,-n\nu;-n;\frac{x}{\nu} \right ) = 
M(k-n,-n,-nx)
$$
for given $k$, $n$ and $x$, $M(a,b;\cdot)$ denoting the first Kummer function with parameters $a$ and $b$ (\cite{NIST10}, 13.2.2); in the present case of a negative integer parameter $a = k-n$, the Kummer function $M(k-n,-n,\cdot)$  further relates to the generalized Laguerre polynomial $L_{n-k}^{(-n-1)}$ by the identity (\cite{NIST10}, 13.6.19)
\begin{equation}
L_{n-k}^{(-n-1)}(-nx) = \binom{n-k-(n+1)}{n-k} \times M(k-n,-n,-nx)
\label{Lagu1}
\end{equation}
where the latter binomial coefficient simply reduces to 
$(-1)^{n-k} \binom{n}{k}$ after elementary manipulation. From the latter discussion and identity (\ref{Lagu1}), we therefore derive that the scaled coefficient $A_{n,k}(x/\nu,\nu)$ has the limit
\begin{align}
\lim_{\nu \rightarrow \infty} 
A_{n,k} \left ( \frac{x}{\nu}, \nu \right ) & \, =  
\widetilde{A}_{n,k}(x) = (-1)^k \binom{n}{k} \times 
M(k-n,-n,-nx) 
\nonumber \\
& \, = (-1)^n L_{n-k}^{(-n-1)}(-nx)
\label{Lagu2}
\end{align}
with given $1 \leqslant k \leqslant n$ and complex $x$. In a similar manner, definition (\ref{DefABxNU}) entails that the scaled coefficient 
$B_{n,k}(x/\nu,\nu)$ has the limit
$$
\lim_{\nu \rightarrow \infty} B_{n,k} \left ( \frac{x}{\nu}, \nu \right ) = 
(-1)^k \binom{n}{k}M(k-n,k,kx)
$$
where $M(k-n,k,\cdot)$ relates in turn to the Laguerre polynomial 
$L_{n-k}^{(k-1)}$ via 
\begin{equation}
L_{n-k}^{(k-1)}(kx) = \binom{n-k+(k-1)}{n-k} \times M(k-n,k,kx)
\label{Lagu3}
\end{equation}
and where the binomial coefficient reduces to $k \binom{n}{k}/n$. From the previous results and identity (\ref{Lagu3}), we deduce that the scaled coefficient $B_{n,k}(x/\nu,\nu)$ tends to
\begin{equation}
\lim_{\nu \rightarrow \infty} B_{n,k} \left ( \frac{x}{\nu}, \nu \right ) =  \widetilde{B}_{n,k}(x) = 
(-1)^k \, \frac{n}{k} \, L_{n-k}^{(k-1)}(kx)
\label{Lagu4}
\end{equation}
for given $1 \leqslant k \leqslant n$ and complex $x$. Inversion formulae 
(\ref{eq:inversionRBIS}) with the required matrices $\widetilde{A}(x)$ and 
$\widetilde{B}(x)$ consequently follow.
\end{proof}




\section{Appendix}


\subsection{Proof of Lemma \ref{lemm1}}
\label{A1}
\textbf{a)} By the reflection formula 
$\Gamma(z)\Gamma(1-z) = \pi/\sin(\pi \, z)$, $z \notin -\mathbb{N}$ 
(\cite{NIST10}, §5.5.3), applied to the argument $z = r-\mu$, the generic term $d_r(\lambda,\mu)$ of the sum $D_N(\lambda,\mu)$ equivalently reads
$$
d_r(\lambda,\mu) = \frac{(-1)^r}{\Gamma(1+r-\lambda)\Gamma(1-r+\mu)} = 
- \frac{\sin(\pi \mu)}{\pi} \, \frac{\Gamma(r-\mu)}{\Gamma(1+r-\lambda)}
$$
and Stirling's formula (\cite{NIST10}, §5.11.3) for large $r$ entails that 
$d_r(\lambda,\mu) = O(r^{\lambda-\mu-1})$; the series 
$\sum_{r \geqslant 0} d_r(\lambda,\mu)$ is therefore convergent if and only if 
$\Re(\mu) > \Re(\lambda)$. Write then the finite sum $D_N(\lambda,\mu)$ as the difference
\begin{align}
& \, \sum_{r=0}^{+\infty} \frac{(-1)^r}{\Gamma(1+r-\lambda)\Gamma(1-r+\mu)} - 
\sum_{r=N}^{+\infty}\frac{(-1)^r}{\Gamma(1+r-\lambda)\Gamma(1-r+\mu)} \; = 
\nonumber \\
& \, \sum_{r=0}^{+\infty}\frac{(-1)^r}{\Gamma(1+r-\lambda)\Gamma(1-r+\mu)} - 
\sum_{r=0}^{+\infty}\frac{(-1)^{r+N}}{\Gamma(1+r+N-\lambda)\Gamma(1-r-N+\mu)};
\nonumber
\end{align}
applying similarly the reflection formula to the argument $z = r-\mu+N$ for the second sum, we obtain
\begin{align}
D_N(\lambda,\mu) & \, = \frac{\sin(\pi \, \mu)}{\pi} 
\left [ \sum_{r=0}^{+\infty} \frac{\Gamma(r-\mu+N)}{\Gamma(1+r+N-\lambda)} 
- \sum_{r=0}^{+\infty} \frac{\Gamma(r-\mu)}{\Gamma(1+r-\lambda)} \right ]
\nonumber \\
& \, = \frac{\sin(\pi \, \mu)}{\pi} 
\left [ \sum_{r=0}^{+\infty} 
\frac{(N-\mu)_r\Gamma(r-\mu)}{(1+N-\lambda)_r\Gamma(1+N-\lambda)} 
- \sum_{r=0}^{+\infty} \frac{(-\mu)_r\Gamma(-\mu)}{(1-\lambda)_r\Gamma(1-\lambda)} \right ]
\nonumber
\end{align}
when introducing Pochhammer symbols of order $r$, hence
\begin{align}
D_N(\lambda,\mu) = \frac{\sin(\pi \, \mu)}{\pi} 
\Bigl [ & \, \frac{\Gamma(N-\mu)}{\Gamma(1+N-\lambda)} \, F(1,N-\mu;1+N-\lambda;1) \; - 
\nonumber \\
& \, \frac{\Gamma(-\mu)}{\Gamma(1-\lambda)} \, F(1,-\mu;1-\lambda;1) \Bigr ]
\nonumber
\end{align}
after the definition of the Hypergeometric function $F$. Now, recall the identity (\cite{GRAD07}, §9.122.1)
\begin{equation}
F(\alpha,\beta;\gamma;1) = \frac{\Gamma(\gamma)\Gamma(\gamma-\alpha-\beta)}
{\Gamma(\gamma-\alpha)\Gamma(\gamma-\beta)}, \qquad 
\Re(\gamma) > \Re(\alpha + \beta);
\label{HyperF1}
\end{equation}
when aplying (\ref{HyperF1}) to the values $\alpha = 1$, $\beta = N-\mu$, 
$\gamma = 1 + N -\lambda$ (resp. $\alpha = 1$, $\beta = -\mu$, 
$\gamma = 1 - \lambda$), the latter sum $D_N(\lambda,\mu)$ consequently reduces to 
\begin{equation}
D_N(\lambda,\mu) = \frac{\sin(\pi \, \mu)}{\pi} 
\frac{\Gamma(\mu-\lambda)}{\Gamma(1-\lambda+\mu)} 
\left [ \frac{\Gamma(N-\mu)}{\Gamma(N-\lambda)} - \frac{\Gamma(-\mu)}{\Gamma(-\lambda)} \right ], \qquad \Re(\mu) > \Re(\lambda).
\label{A11}
\end{equation}
By the reflection formula for function $\Gamma$ again, we have
$$
\Gamma(N-\mu)\Gamma(1-N+\mu) = - \frac{(-1)^N \pi}{\sin(\pi \mu)}, \qquad 
\Gamma(-\mu)\Gamma(1+\mu) = - \frac{\pi}{\sin(\pi \mu)},
$$
so that expression (\ref{A11}) eventually yields 
\begin{align}
D_N(\lambda,\mu) & \, = - \frac{\Gamma(\mu-\lambda)}{\Gamma(1-\lambda+\mu)} 
\left [ \frac{(-1)^N}{\Gamma(N-\lambda)\Gamma(1-N+\mu)} - 
\frac{1}{\Gamma(-\lambda)\Gamma(1+\mu)} \right ] 
\nonumber \\
& \, = \frac{1}{\lambda-\mu} 
\left [ \frac{(-1)^N}{\Gamma(N-\lambda)\Gamma(1-N+\mu)} - 
\frac{1}{\Gamma(-\lambda)\Gamma(1+\mu)} \right ]
\nonumber
\end{align}
which states the first identity (\ref{Sn}) for $\Re(\mu) > \Re(\lambda)$.

\textbf{b)} Besides, the reflection formula of function $\Gamma$ applied to 
$z = r-\lambda$ enables us to write $D_N(\lambda,\lambda)$ as
\begin{align}
D_N(\lambda,\lambda) & \, = 
\sum_{r=0}^{N-1} \frac{(-1)^r}{\Gamma(1+r-\lambda)\Gamma(1-r+\lambda)} 
= - \frac{\sin(\pi \lambda)}{\pi} \sum_{r=0}^{N-1} 
\frac{\Gamma(r-\lambda)}{\Gamma(1-r+\lambda)} 
\nonumber \\
& \, = - \frac{\sin(\pi \lambda)}{\pi} \sum_{r=0}^{N-1} \frac{1}{r-\lambda} 
= \frac{\sin(\pi \lambda)}{\pi} 
\left [ \psi(-\lambda) - \psi(N-\lambda) \right ]
\nonumber
\end{align}
after the expansion formula (\cite{NIST10}, Chap.5, §5.7.6) for function 
$\psi$ and the second identity (\ref{Sn}) for $\mu = \lambda$ follows.

\textbf{c)} The first identity (\ref{Sn}) stated for 
$\Re(\mu) > \Re(\lambda)$ defines an analytic function of variables 
$\lambda \in \mathbb{C}$ and $\mu \in \mathbb{C}$ for $\mu \neq \lambda$; besides, it is easily verified that this function has the limit given by 
$D_N(\lambda,\lambda)$ when $\mu \rightarrow \lambda$. On the other hand, the finite sum $D_N(\lambda,\mu)$ defines itself an entire function of 
$\lambda \in \mathbb{C}$ and $\mu \in \mathbb{C}$; by analytic continuation, identity (\ref{Sn}) consequently holds for any pair 
$(\lambda,\mu) \in \mathbb{C} \times \mathbb{C}$ $\blacksquare$

\subsection{Proof of Lemma \ref{lemmU}}
\label{A3}
\textbf{a)} We first determine the convergence radius of the power series 
$\pmb{\Sigma}(w)$ in terms of complex parameter $\nu$. For large $b$, 

$\bullet$ if $1 - \nu \notin \; ]-\infty,0]$ and $-\nu \notin \; ]-\infty,0]$, that is, if $\nu \in \mathbb{C} \setminus [0,+\infty[$, the generic term 
$\sigma_b$ of this series is asymptotic to
$$
\sigma_b = \frac{\Gamma(b(1-\nu))}{\Gamma(b)\Gamma(1-b\nu)} 
= - \frac{1}{\nu} \cdot \frac{\Gamma(b(1-\nu))}{b! \, \Gamma(-b\nu)} 
\sim - \sqrt{\frac{-\nu}{2\pi(1-\nu)b}} \, e^{b \cdot \varphi^-(\nu)}
$$
after Stirling's formula $\Gamma(z) \sim \sqrt{2\pi} e^{z \log z -z}/\sqrt{z}$ for large $z$ with $\vert \mathrm{arg}(z) \vert \leqslant \pi - \delta$, 
$\delta > 0$ (\cite{NIST10}, Chap.5, 5.11.3) , and where we set  
$\varphi^-(\nu) = (1-\nu) \log (1-\nu) + \nu \log(-\nu)$; 

$\bullet$ if $1 - \nu \notin \; ]-\infty,0]$ and $\nu \in [0,+\infty[$ (the parameter $\nu$ is consequently real), that is, $0 \leqslant \nu < 1$, write 
$\Gamma(1-b\nu) = \pi / [\sin(\pi b \nu) \Gamma(b\nu)]$ after the reflection formula so that the generic term $\sigma_b$ is now asymptotic to
$$
\sigma_b = - \frac{1}{\nu} \cdot \frac{\Gamma(b(1-\nu))}{b! \, \pi} 
\Gamma(b\nu) \, \sin(\pi b \nu) \sim 
- \frac{1}{\nu} \sqrt{\frac{\pi}{2\nu(1-\nu)b^3}} \, \sin(\pi b \nu) 
\, e^{b \cdot \varphi(\nu)}
$$
after Stirling's formula (ibid.) and where
$\varphi(\nu) = (1-\nu) \log (1-\nu) + \nu \log(\nu)$;

$\bullet$ finally if $\nu - 1 \in [0,+\infty]$, that is, if $\nu \geqslant 1$, write $\Gamma(1-b\nu) = \pi / [\sin(\pi b \nu) \Gamma(b\nu)]$ together with  
$\Gamma(1-b(1-\nu)) = \pi / [\sin(\pi b (1-\nu)) \Gamma(b(1-\nu))]$ after the reflection formula so that the generic term $\sigma_b$ is asymptotic to
$$
\sigma_b = \frac{(-1)^{b-1}}{\nu} \cdot 
\frac{\Gamma(b\nu)}{b! \, \Gamma(1-b(1-\nu))} \sim 
\frac{(-1)^{b-1}}{\nu} \sqrt{\frac{1}{2\pi \, \nu(\nu - 1)b^3}} \, 
e^{b \cdot \varphi^+(\nu)}
$$
after Stirling's formula and where
$\varphi^+(\nu) = (1-\nu) \log (\nu-1) + \nu \log(\nu)$. By the latter discussion, it therefore follows that the power series $\pmb{\Sigma}(w)$ has the finite convergence radius $R(\nu) = \vert e^{-\psi(\nu)} \vert$ with 
$\psi(\nu)$ defined by 
$$
\psi(\nu) = \left\{
\begin{array}{ll}
\varphi^-(\nu), \quad \; \, 
\nu \in \mathbb{C} \setminus \, [0,+\infty[,
\\ \\
\varphi(\nu), \; \quad \; \; \, \, 
\nu \in \mathbb{R}, \; 0 \leqslant \nu < 1, 
\\ \\
\varphi^+(\nu), \; \quad \; 
\nu \in \mathbb{R}, \; \nu \geqslant 1,
\end{array} \right.
$$
as given in Lemma \ref{lemmU}.

Now, by the above expression of $\sigma_b$ for $\nu \in \mathbb{C} \setminus 
[0,+\infty[$, write
\begin{equation}
\sigma_b = - \frac{1}{\nu} \cdot \frac{\Gamma(b(1-\nu))}{b! \, \Gamma(-b\nu)}  = - \frac{1}{\nu} \cdot \binom{-1 + b(1-\nu)}{b} = - \frac{1}{\nu} \cdot 
\binom{\alpha + b\beta}{b}
\label{DefSig0}
\end{equation}
for all $b \geqslant 1$, where we set $\alpha = -1$ and $\beta = 1-\nu$. From 
(\cite{PolSze72}, Problem 216, p.146, p. 349), it is known that 
\begin{equation}
1 + \sum_{b \geqslant 1} \binom{\alpha + b\beta}{b} w^b = 
\frac{\Theta(w)^{\alpha+1}}{(1-\beta)\Theta(w) + \beta}
\label{PolSzeg0}
\end{equation}
for any pair $\alpha$ and $\beta$, where $\Theta(w)$ denotes the unique solution to the implicit equation $1 - \Theta + w\, \Theta^\beta = 0$
with $\Theta(0) = 1$. By expression (\ref{DefSig0}) and relation 
(\ref{PolSzeg0}) applied to the specific values $\alpha = -1$ and 
$\beta = 1-\nu$, we can consequently assert that the series $\pmb{\Sigma}(w)$ equals
$$
\pmb{\Sigma}(w) = \sum_{b \geqslant 1} \sigma_b \, w^b = - \frac{1}{\nu} 
\left [ \frac{1}{\nu \, \Theta(w) + 1-\nu} - 1 \right ] = 
\frac{\Theta(w)-1}{\nu \, \Theta(w) + 1-\nu}
$$
for $\vert w \vert < R(\nu)$, as claimed. The validity of equality 
(\ref{U0}) for real $\nu \in [0,+\infty[$ follows by analytic continuation. 

\textbf{b)} By differentiating the implicit relation (\ref{DefTheta}) at point $w \neq 0$, we obtain the equality 
$-\Theta'(w) + \Theta(w)^{1-\nu} 
+ w \, \Theta'(w) \Theta(w)^{-\nu}(1-\nu) = 0$, hence 
\begin{align}
\Theta'(w) = & \, \frac{\Theta(w) ^{1-\nu}}{1 - w \, \Theta(w)^{-\nu}(1-\nu)} 
= \frac{(\Theta(w)-1)/w}{1 - w(\Theta(w)-1)(1-\nu)/w\Theta(w)} 
\nonumber \\
= & \, \frac{\Theta(w)}{w} \, \frac{\Theta(w)-1}{\nu \, \Theta(w) + 1 - \nu}
\nonumber 
\end{align}
after using relation (\ref{DefTheta}) again for $\Theta(w)^{1-\nu}$; using  relation (\ref{U0}), the latter expression for $\Theta'(w)$ consequently reduces to 
\begin{equation}
\Theta'(w) = \frac{\Theta(w)}{w} \, \pmb{\Sigma}(w).
\label{Theta1}
\end{equation}
Now, differentiating (\ref{U0}) at point $w$ and using (\ref{Theta1}) yields
\begin{equation}
\pmb{\Sigma}'(w) = \frac{\Theta'(w)}{(\nu \, \Theta(w) + 1 - \nu)^2} = 
\frac{\Theta(w) \, \pmb{\Sigma}(w)}{w(\nu \, \Theta(w) + 1 - \nu)^2};
\label{Theta2}
\end{equation}
but solving (\ref{U0}) for $\Theta(w)$ in terms of $\pmb{\Sigma}(w)$ readily gives the rational expressions
$$
\Theta(w) = \frac{(1-\nu)\pmb{\Sigma}(w) + 1}{1-\nu \, \pmb{\Sigma}(w)}, 
\qquad \nu \Theta(w) + 1 - \nu = \frac{1}{1 - \nu \pmb{\Sigma}(w)}
$$ 
which, once replaced into the right-hand side of (\ref{Theta2}), entail
$$
\pmb{\Sigma}'(w) = 
\frac{\displaystyle \frac{(1-\nu)\pmb{\Sigma}(w)+1}{1-\nu \, \pmb{\Sigma}(w)} \times 
\pmb{\Sigma}(w)}{w \displaystyle \left ( \frac{1}{1 - \nu \pmb{\Sigma}(w)} \right )^2}
$$
and readily provide differential equation (\ref{EDiff0}) after algebraic reduction $\blacksquare$


\end{document}